\documentclass[12pt]{amsart}
\usepackage{amsmath,amssymb,amsbsy,amsfonts,amsthm,latexsym,mathabx,
	amsopn,amstext,amsxtra,euscript,amscd,stmaryrd,mathrsfs,
	cite,array,mathtools,enumerate}

\usepackage{url}
\usepackage[colorlinks,linkcolor=blue,anchorcolor=blue,citecolor=blue,backref=page]{hyperref}

\usepackage{color}
\usepackage{float}

\hypersetup{breaklinks=true}

\usepackage[english]{babel}

\usepackage{mathtools}
\usepackage{todonotes}
\usepackage{algorithm}
\usepackage{algpseudocode}

\usepackage{enumitem}

\usepackage{mathtools}
\usepackage{todonotes}
\usepackage{url}
\usepackage[colorlinks,linkcolor=blue,anchorcolor=blue,citecolor=blue,backref=page]{hyperref}

\begin{document}
	
	\newtheorem{theorem}{Theorem}
	\newtheorem{lemma}[theorem]{Lemma}
	\newtheorem{claim}[theorem]{Claim}
	\newtheorem{cor}[theorem]{Corollary}
	\newtheorem{prop}[theorem]{Proposition}
	\newtheorem{definition}{Definition}
	\newtheorem{question}[theorem]{Open Question}
	\newtheorem{example}[theorem]{Example}
	\newtheorem{remark}[theorem]{Remark}
	
	\numberwithin{equation}{section}
	\numberwithin{theorem}{section}

	\newcommand{\F}{\mathbb{F}}
	\newcommand{\K}{\mathbb{K}}
	\newcommand{\D}[1]{D\(#1\)}
	\def\scr{\scriptstyle}
	\def\\{\cr}
	\def\({\left(}
	\def\){\right)}
	\def\[{\left[}
	\def\]{\right]}
	\def\<{\langle}
	\def\>{\rangle}
	\def\fl#1{\left\lfloor#1\right\rfloor}
	\def\rf#1{\left\lceil#1\right\rceil}
	\def\le{\leqslant}
	\def\ge{\geqslant}
	\def\eps{\varepsilon}
	\def\mand{\qquad\mbox{and}\qquad}

	\def\vec#1{\mathbf{#1}}
	
	\def\ov\Q{\overline\Q}

	\def\bl#1{\begin{color}{blue}#1\end{color}} 
	
	\newcommand{\Q}{\mathbb{Q}}
	
	\newcommand{\C}{\mathbb{C}}
	\newcommand{\Fq}{\mathbb{F}_q}
	\newcommand{\Fp}{\mathbb{F}_p}
	\newcommand{\Disc}[1]{\mathrm{Disc}\(#1\)}
	\newcommand{\Res}[1]{\mathrm{Res}\(#1\)}
	\newcommand{\ord}{\mathrm{ord}}
	\newcommand{\PrePer}{\mathrm{PrePer}}
	
	\newcommand{\Z}{\mathbb{Z}}
	\renewcommand{\L}{\mathbb{L}}
	\newcommand{\PP}{\mathbb{P}}
	\newcommand{\Char}{\mathrm{Char}}
	
	\newcommand{\Norm}{\mathrm{Norm}}
	
	\def\cA{{\mathcal A}}
	\def\cB{{\mathcal B}}
	\def\cC{{\mathcal C}}
	\def\cD{{\mathcal D}}
	\def\cE{{\mathcal E}}
	\def\cF{{\mathcal F}}
	\def\cG{{\mathcal G}}
	\def\cH{{\mathcal H}}
	\def\cI{{\mathcal I}}
	\def\cJ{{\mathcal J}}
	\def\cK{{\mathcal K}}
	\def\cL{{\mathcal L}}
	\def\cM{{\mathcal M}}
	\def\cN{{\mathcal N}}
	\def\cO{{\mathcal O}}
	\def\cP{{\mathcal P}}
	\def\cQ{{\mathcal Q}}
	\def\cR{{\mathcal R}}
	\def\cS{{\mathcal S}}
	\def\cT{{\mathcal T}}
	\def\cU{{\mathcal U}}
	\def\cV{{\mathcal V}}
	\def\cW{{\mathcal W}}
	\def\cX{{\mathcal X}}
	\def\cY{{\mathcal Y}}
	\def\cZ{{\mathcal Z}}
	
	\def\fra{{\mathfrak a}} 
	\def\frb{{\mathfrak b}}
	\def\frc{{\mathfrak c}}
	\def\frd{{\mathfrak d}}
	\def\fre{{\mathfrak e}}
	\def\frf{{\mathfrak f}}
	\def\frg{{\mathfrak g}}
	\def\frh{{\mathfrak h}}
	\def\fri{{\mathfrak i}}
	\def\frj{{\mathfrak j}}
	\def\frk{{\mathfrak k}}
	\def\frl{{\mathfrak l}}
	\def\frm{{\mathfrak m}}
	\def\frn{{\mathfrak n}}
	\def\fro{{\mathfrak o}}
	\def\frp{{\mathfrak p}}
	\def\frq{{\mathfrak q}}
	\def\frr{{\mathfrak r}}
	\def\frs{{\mathfrak s}}
	\def\frt{{\mathfrak t}}
	\def\fru{{\mathfrak u}}
	\def\frv{{\mathfrak v}}
	\def\frw{{\mathfrak w}}
	\def\frx{{\mathfrak x}}
	\def\fry{{\mathfrak y}}
	\def\frz{{\mathfrak z}}
	
	\newcommand{\fp}{\mathfrak{p}}
	\newcommand{\fq}{\frak{q}}
	\newcommand{\fa}{\frak{a}}
	\newcommand{\fb}{\frak{b}}
	\newcommand{\fc}{\frak{c}}
	\newcommand{\fd}{\frak{d}}
	\newcommand{\A}{\frak{a}}
	\newcommand{\fA}{\frak{A}}
	\newcommand{\fB}{\frak{B}}
	\newcommand{\fC}{\frak{C}}
	\newcommand{\fD}{\frak{D}}
	\newcommand{\fP}{\frak{P}}
	\newcommand{\fQ}{\frak{Q}}
	
	\newcommand{\Root}{\mathcal{R}}

	\def\ov\Q{\overline{\Q}}
	\def\ov\K{\overline{\K}}
	\def \brho{\boldsymbol{\rho}}
	
	\def \pf {\mathfrak p}
	
	\def \Prob{{\mathrm {}}}
	\def\e{\mathbf{e}}
	\def\ep{{\mathbf{\,e}}_p}
	\def\epp{{\mathbf{\,e}}_{p^2}}
	\def\em{{\mathbf{\,e}}_m}
	\def\h{\mathrm{h}}
	\def\disc{\mathrm{disc}}
	
	\newcommand{\sR}{\ensuremath{\mathscr{R}}}
	\newcommand{\sDI}{\ensuremath{\mathscr{DI}}}
	\newcommand{\DI}{\ensuremath{\mathrm{DI}}}

	\def\Orb{\mathrm{Orb}}
	\def\aOrb{\overline{\mathrm{Orb}}}

	\def \Nm{{\mathrm{Nm}}}
	\def \Gal{{\mathrm{Gal}}}
	
	\newenvironment{notation}[0]{%
		\begin{list}%
			{}%
			{\setlength{\itemindent}{0pt}
				\setlength{\labelwidth}{1\parindent}
				\setlength{\labelsep}{\parindent}
				\setlength{\leftmargin}{2\parindent}
				\setlength{\itemsep}{0pt}
			}%
		}%
		{\end{list}}
	
	\definecolor{dgreen}{rgb}{0.,0.6,0.}
	\def\tgreen#1{\begin{color}{dgreen}{\it{#1}}\end{color}}
	\def\tblue#1{\begin{color}{blue}{\it{#1}}\end{color}}
	\def\tred#1{\begin{color}{red}#1\end{color}}
	\def\tmagenta#1{\begin{color}{magenta}{\it{#1}}\end{color}}
	\def\tNavyBlue#1{\begin{color}{NavyBlue}{\it{#1}}\end{color}}
	\def\tMaroon#1{\begin{color}{Maroon}{\it{#1}}\end{color}}

	\title[Powers in orbits]{Powers in orbits and semigroups generated by polynomials} 
	\newcommand{\set}[1]{\left\{#1\right\}}
\begin{abstract}  
In this paper, we study the presence of perfect powers in orbits of polynomials, or of semigroups generated by a finite family of polynomials, defined over a number field. In particular, we correct and generalize a result of Ahmadi et al. (2012),  showing that multiplicative shifts of perfect powers in orbits can be detected by a finite algorithm. We also give a finiteness result on elements in orbits satisfying certain multiplicative relations.
\end{abstract}

\author[A. Ostafe]{Alina Ostafe}
\address{School of Mathematics and Statistics, University of New South Wales, Sydney, NSW 2052, Australia}
\email{alina.ostafe@unsw.edu.au}

\author[D. T. Tran]{Dat T. Tran}
\address{School of Mathematics and Statistics, University of New South Wales, Sydney, NSW 2052, Australia}
\email{dat.t.tran@unsw.edu.au}

\pagenumbering{arabic}

\maketitle

	\section{Motivation and statements of main results}
	\subsection{Motivation}
For a polynomial $f\in \K[X]$ over a field $\K$ we define
	the sequence of polynomials:
\begin{align*}
    	f^{(0)}(X)  = X, \qquad f^{(n)}(X)  = f(f^{(n-1)}(X)), \quad
	n =1, 2, \ldots\,.
\end{align*}
	The polynomial $f^{(n)}$ is called the $n$-th iterate of the polynomial $f$.
	
	Algebraically, the set of iterates $\{f^{(i)} : i \in \mathbb{N}\}$ forms a semigroup under composition. Extending this notion to a general non-empty family of polynomials $\mathcal{F} \subseteq \mathbb{K}[X]$, we denote by $\langle \mathcal{F} \rangle$ the semigroup generated by $\mathcal{F}$ under composition. More precisely, the elements of $\langle \mathcal{F} \rangle$ are all finite compositions of the form $f_1 \circ f_2 \circ \dots \circ f_r$, where $f_j \in \mathcal{F}$ for $1 \le j \le r$.
	
The presence of perfect powers in orbits of a polynomial $f\in \K[X]$ over a number field $\K$ has been intensively studied in the last decade. Motivated initially by studying the dynamical irreducibility of quadratic polynomials, Ahmadi, Luca, Ostafe and Shparlinski~\cite{ALOS} studied the presence of squares in the orbit of the critical point $\gamma$ of an irreducible quadratic polynomial  $f(X)=aX^2+bX+c\in \Z[X]$. They showed~\cite[Theorem 5]{ALOS} that if $f^{(k)}(\gamma)$ is a square, then
$$
k<\exp(2^{1377}H^{80}),
$$
where $H=\max(|a|,|b|,|c|,3)$. While $k$ is indeed uniformly bounded, the proof of~\cite[Theorem 5]{ALOS} uses a functional relation in the last line on page 362 of~\cite{ALOS} which is unfortunately incorrect. 

Cahn, Jones  and Spear~\cite{CJS} have also obtained a series of results about the structure 
of intersections of orbits 
of a rational function $\psi \in \K(X)$   over a  field $\L$ of characteristic zero,  
 with the image
$\varphi(\K)$ of  $\K$  for 
another rational function $\varphi \in \K(X)$.  
 In the special case  of  $\varphi(X) = X^m$, with a fixed integer $m \ge 2$, this corresponds to the case
of powers in orbits of rational functions, see~\cite[Corollaries~1.6--1.8]{CJS}.  
In particular,
Cahn, Jones  and Spear~\cite[Corollary~1.8]{CJS} show that if $f \in \K[X]$ is of degree $d_f\ge 2$ and $m\ge 2$ such that for some $\alpha \in \K$ the 
intersection of the  orbit of $\alpha$  
with the set of $m$-th powers 
$\K^m$ is  infinite, then one of the following holds:
\begin{itemize}
\item $f(X)=cX^j (g(X))^m$ for some $g \in \K[X]$, $0 \le j \le m-1$, and $c\in  \K^\times$; or
\item $m=2$ and there is a non-zero $c\in  L$ such that $cf(X/c)=(-1)^{d_f}T_{d_f}(X+2)-2$, where $T_{d_f}$ is the  monic Chebyshev polynomial of degree $d_f$.
\end{itemize}
Soon after, Ostafe, Pottmeyer and Shparlinski~\cite{OPS} consider  this question for polynomials  $f(X) \in \K[X]$  over a number field $\K$ and 
extend it to considering the union of all orbits over all   $\alpha \in \K$, and study its intersection with the set of all nontrivial powers of $S$-integers.

The goal of this note is to correct and improve~\cite[Theorem 5]{ALOS}, as well as generalise it in various ways. More precisely we extend~\cite[Theorem 5]{ALOS} to:
\begin{itemize}
\item orbits of polynomials $f\in\K[X]$, defined over a number field (rather than $\Q$) of any degree $d_f\ge 2$ (rather than quadratic) and not necessarily irreducible, at any $\alpha\in\K$ (rather than critical point) (Theorem~\ref{thm:pow});
\item orbits of semigroups generated by a finite family  of polynomials defined over a number field  $\K$ (Theorem~\ref{thm:extend_for_semigroup_final}).
\end{itemize}

Moreover, we consider multiplicative shifts of powers of $S$-integers (where $S$ is a finite set of places such that $f\in\cO_S[X]$ and $\alpha\in\cO_S$) rather than just perfect powers.
We also obtain  a finiteness result on the set of elements in orbits of semigroups that are multiplicatively dependent with a given element of $\K$ (Corollary~\ref{cor:multiplicative_dependence}). This is motivated by various works on the multiplicative dependence of elements in orbits, such as~\cite{BBGMOS,BOSS,OSSZ}.\
	
	\subsection{Notation and conventions}\label{sec:notation}
		Throughout the paper, $p$ always denotes a prime number. 
For any  positive real number $x$, we denote by $\lfloor x\rfloor$ the largest integer smaller or equal to $x$.

Let $\mathbb{K}$ be a number field and fix an algebraic closure $\overline{\mathbb{K}}$. We  denote by $\K^\times$ the multiplicative group of all non-zero elements in $\K$. Let $S$ be a finite set of places of $\mathbb{K}$ containing the archimedean ones, and $\mathcal{O}_S$ the ring of $S$-integers. For $\alpha \in \K$, 
we define the {\it $S$-norm} of $\alpha$ by
\begin{equation}  \label{eq:S-norm}
N_S(\alpha)=\prod_{v\in S}|\alpha|_v,
\end{equation}
 where  $|\cdot|_v$ is the normalised absolute valuation of $\K$ at the place $v$, see~\cite{BG06,HS00,Zan} for more details on valuations.

For a polynomial $f\in \mathbb{K}[X]$ we use the notation $d_f$ for its degree. We define $\mathcal{C}_f$ as the set of critical values of $f$, that is, \begin{align}
    \label{eq:set of critical values}\mathcal{C}_f=\left\{f(x):f'(x)=0\right\}.
\end{align} We also denote by $\Root(f)$ as the set of distinct roots of $f$ in $\overline{\K}$.
		
For $m\ge 2$, we define by $\Root_m(f)$ as the set of distinct roots of $f$ whose multiplicities are not divisible by $m$.
		
In the case that $f(X)=aX^2+bX+c$, we define
    \begin{align}\label{eq:c(f)}
			c(f)=4ac-b^2+2b.
		\end{align}
		
For each family $\mathcal{F}\subset \K[X]$, an element $\alpha\in \K$ is called a \textit{wandering point} of $\mathcal{F}$ if $W(\alpha)\ne W'(\alpha)$ for all distinct words $W,W'\in \langle \mathcal{F} \rangle$, where $\langle \mathcal{F} \rangle$ is the semigroup generated by $\mathcal{F}$ under composition.

Finally, we use the Landau symbol $O$ and the Vinogradov symbol $\ll$. Recall that the assertions $U=O(V)$ and $U \ll V$ are both equivalent to the inequality $|U|\le cV$ with some absolute constant $c>0$. The implied constants in our bounds are effectively computable and may depend on  $\K$, $S$, a fixed power integer $m$, $f\in\K[X]$ or the set  $\cF$ of polynomials in $\K[X]$ that we consider in our results.			
	
\subsection{Main results}
			
In this section and the rest of the paper we fix $\mathbb{K}$ to be a number field, unless otherwise stated, and $S$ a finite set of places of $\mathbb{K}$ containing the archimedean ones.

\begin{theorem}\label{thm:pow}
Let $f \in \mathcal{O}_S[X]$ be a polynomial of degree $d_f \ge 2$, $\alpha \in \mathcal{O}_S$ a wandering point, and $m \ge 2$ an integer. Assume that $f$ is not of the form $f(X) = X^u p(X)^m$ for some integer $u \ge 0$ and $p \in \overline{\mathbb{K}}[X]$, and if $m = 2$ then also $f^{(2)}$ is not of this form. If there exist a non-zero element $b \in \mathcal{O}_S$ and an element $y \in \mathcal{O}_S$ such that 
    \begin{equation}\label{eq:sup_equa}
	f^{(k)}(\alpha) = b y^m,
    \end{equation}
then $k$ satisfies the inequality
    \begin{equation*}
    k \le C_1\log N_S(b) + C_2,
    \end{equation*}
where $C_1$ and $C_2$ are effectively computable constants depending on $\K$, $m$,  and $f$, and $C_2$ depends on $S$ as well.
\end{theorem}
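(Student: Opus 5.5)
The strategy is to write $f^{(k)}(\alpha)=f^{(2)}(f^{(k-2)}(\alpha))$, or $f(f^{(k-1)}(\alpha))$, and treat \eqref{eq:sup_equa} as a superelliptic equation $b y^m = g(x)$, with $g=f$ or $g=f^{(2)}$ and $x=f^{(k-1)}(\alpha)$ or $x=f^{(k-2)}(\alpha)$ an $S$-integer. We then apply an effective result for superelliptic equations whose bound depends polynomially on $N_S(b)$. The model is the Brindza/Bérczes--Evertse--Győry bound: if $g\in\cO_S[X]$ has at least two distinct roots of multiplicity not divisible by $m$, and at least three such roots when $m=2$, then every solution of $g(x)=by^m$ in $\cO_S$ satisfies
\[
h(x)\le c_1 \log N_S(b) + c_2 ,
\]
with $c_1$ depending only on $\K,m,g$ and $c_2$ also on $S$. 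To get this form we first absorb the $m$-th power part of $b$ into $y$. We may then assume $b$ is $m$-th power free up to $S$-units, and a standard argument lets us take $b$ from a finite set of representatives of $\cO_S^\times/(\cO_S^\times)^m$ multiplied by an ideal of norm at most $N_S(b)$. Alternatively, we pass to an extension $\K(b^{1/m})$ after enlarging $S$, and track how the height of $b$ enters.

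The first step is to check the root condition, which is where the hypotheses on $f$ enter. Since $f$ is not of the form $X^u p(X)^m$, the set $\Root_m(f)$ either contains at least two elements, or contains exactly one element $\rho\ne 0$, in which case $f=c(X-\rho)^j p^m$. The hypothesis is stated for $f$ itself with $X$, so in the one-root case we replace $g$ by a suitable conjugate, or use $f^{(2)}$. I would show that $f^{(2)}$, or for $m\ge3$ already $f$, has $|\Root_m|\ge 2$, respectively $\ge 3$ for $m=2$. The point is that the roots of $f^{(2)}$ are the preimages under $f$ of roots of $f$. A root $\rho$ with multiplicity not divisible by $m$ has preimages under $f$, and unless all of them are critical with multiplicity divisible by $m$, this produces several roots in $\Root_m(f^{(2)})$. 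When the single-root case persists for $f^{(2)}$, we translate by $\rho$ and use the hypothesis that $f^{(2)}\ne X^u p^m$. I expect this Riemann--Hurwitz style combinatorics, counting how many preimages of a root can be totally ramified, to be the main obstacle. It is also precisely where the flawed functional relation of \cite{ALOS} occurred, so I would handle $m=2$ carefully via the explicit hypothesis on $f^{(2)}$.

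The second step converts the height bound into a bound on $k$. Since $\alpha$ is wandering, its orbit is infinite, so $\alpha$ is not preperiodic and the canonical height $\hat h_f(\alpha)>0$ by Northcott's theorem. Then $h(f^{(n)}(\alpha)) = d_f^{\,n}\hat h_f(\alpha)+O(1)$. Combining this with step one gives
\[
d_f^{\,k-2}\hat h_f(\alpha)\le c_1\log N_S(b)+c_3 .
\]
Taking logarithms yields the much stronger bound $k\ll \log\log N_S(b)$, which in particular implies the claimed linear bound. If one wants constants independent of $\alpha$, one uses the lower bound for $\hat h_f$ on non-preperiodic points of bounded degree, which is available effectively for polynomials over $\cO_S$ since $\hat h_f(\alpha)$ is bounded below in terms of the denominators, and the denominators are controlled by $S$. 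Effectivity of all constants follows from the effectivity of the superelliptic bound and of the height comparison $|h(f(x))-d_f h(x)|\le c(f)$.
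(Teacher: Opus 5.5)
Your outline (a superelliptic bound for a low iterate, then exponential height growth) is the right shape, but two of its ingredients are wrong as stated.

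First, the superelliptic bound you take as your model is not a known theorem. The available effective results (e.g.\ B\'erczes--Evertse--Gy\H{o}ry, and the bound recorded as Lemma~\ref{lem:T_LeVeque}) give $\h(x)\le C\,N_S(b)^{e}$, which is polynomial in $N_S(b)$. A bound $\h(x)\le c_1\log N_S(b)+c_2$ is not available. Already for $m=2$ and cubic $g$ it would be a Hall--Lang type statement about integral points on the quadratic twists $by^2=g(x)$, which is open. Reducing $b$ modulo $m$-th powers and $S$-units does not change this. One still gets $x-\alpha_i=\delta_i\xi_i^{m_i}$ with $\h(\delta_i)$ of size $\log N_S(b)$. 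The $\delta_i$ then sit in the coefficients of the resulting Thue or unit equations, and Baker-type bounds are exponential in the heights of those coefficients; that is exactly where the factor $N_S(b)^{e}$ comes from.

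So your conclusion $k\ll\log\log N_S(b)$ is unsupported. With the correct polynomial bound, your second step gives $d_f^{\,k}\ll \h(f^{(k-j)}(\alpha))\le C\,N_S(b)^{e}$. That is exactly the linear bound $k\le C_1\log N_S(b)+C_2$ of the theorem, and it is how the paper argues.

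The root criterion you quote is also only correct for prime $m$. What is needed is LeVeque's condition on $m_i=m/\gcd(m,e_i)$, namely $m_1\ge3,\ m_2\ge2$ or $m_1=m_2=m_3=2$. For example, $f=(X^2-1)^2$ with $m=4$ satisfies the hypotheses of the theorem and has $|\Root_4(f)|=2$. Yet $f(x)=4y^4$ has infinitely many integer solutions coming from $x^2-2y^2=1$, so $g=f$ cannot be used there.

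Second, restricting to $g\in\{f,f^{(2)}\}$ cannot work, so the combinatorial step you postpone is not merely hard but false as formulated. Take $f=X^2-1$, $m=2$ and the wandering point $\alpha=2$ (orbit $2,3,8,63,\dots$). Neither $f$ nor $f^{(2)}=X^2(X^2-2)$ is of the form $X^u p(X)^2$, so the hypotheses hold. But $\Root_2(f)=\{\pm1\}$ and $\Root_2(f^{(2)})=\{\pm\sqrt2\}$. Indeed $f^{(2)}(x)=2y^2$ has infinitely many integer solutions: take $y=xz$ with $x^2-2z^2=2$. Only $f^{(3)}=(X^2-1)^2(X^4-2X^2-1)$ satisfies LeVeque.

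The paper sidesteps this analysis via Young's lemma (Lemma~\ref{lem:Young}). Under exactly the theorem's hypotheses, some $(f^{(j)},m)$ with $j\le 6$ satisfies LeVeque. One then writes $f^{(k)}(\alpha)=f^{(j)}(f^{(k-j)}(\alpha))$, losing only an additive constant in $k$.

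Finally, to make the constants independent of $\alpha$ you do not need any argument about denominators. By the pigeonhole argument of Lemma~\ref{lem:needed_index}, some $f^{(i)}(\alpha)$ with $i\le\lfloor E_\K(C_f+1)\rfloor$ has height $>C_f+1$. Lemma~\ref{lem:height_transformation} then gives $\h(f^{(n)}(\alpha))-C_f>d_f^{\,n-i}$ for all $n\ge i$, uniformly in $\alpha$.
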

In the proof of Theorem~\ref{thm:pow} we make explicit the constants $C_1$ and $C_2$ in the bound above.

\begin{remark}
We note that, for fixed non-zero $b\in\cO_S$, if one does not fix the power $m$ in Theorem~\ref{thm:pow}, then one obtains an ineffective finiteness result. Indeed,  if $y$ in~\eqref{eq:sup_equa} is non-zero and not an $S$-unit, then by~\cite[Theorem 2.2]{BBGMOS} we obtain that 
$$
m\ll  (\log^*N_S (b)) \log^* \log^* N_S (b),
$$
where $\log^*x=\max(1,\log x)$. Therefore, in this case we can apply Theorem~\ref{thm:pow} to obtain an effective bound for $k$. If $y$ is an $S$-unit, then a non-effective bound for $k$ follows from~\cite[Theorem 1.2]{BOSS}.
\end{remark}

Since the presence of squares in orbits is intimately related to studying  the dynamical irreducibility of quadratic polynomials, see~\cite{ALOS} and references therein, we present below an explicit version of Theorem~\ref{thm:pow} for quadratic irreducible polynomials over $\Z$,  improving and correcting the bound of~\cite[Theorem 5]{ALOS}. 
Such a bound can be obtained directly from the proof of Theorem~\ref{thm:pow} (with $b=1$ and $m=2$), by computing the constants $C_1$ and $C_2$, which depend on $f$, $\K$ and the set $S$ containing $2$ and the primes dividing the leading coefficient. However, we present below a better bound which is uniform in $S$, improving as well the dependence on $H(f)$.

\begin{theorem}
\label{thm:quadratic_pol}
Let $f=aX^2+bX+c  \in \mathbb{Z}[X]$ be a quadratic polynomial that is irreducible over $\mathbb{Q}$ 
such that its critical point $\gamma$ is a wandering point. 
If there exist an integer $k \ge 2$ and an element $y \in \Q$ such that $f^{( k)}(\gamma) = y^2$, then $k$ satisfies
    \begin{align*}
        k\le 128000\log H(f) + 567047,
    \end{align*}
where $H(f) = \max(|a|, |b|, |c|)$.
	\end{theorem}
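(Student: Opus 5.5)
We reduce to the monic-like normal form via conjugation and then use the standard "square equals quadratic-in-an-iterate" trick. Put $\gamma=-b/(2a)$ and write $f(X)=a(X-\gamma)^2+\delta$, where $\delta=f(\gamma)=c-b^2/(4a)$. Set $u_n=f^{(n)}(\gamma)$. Then for every $n\ge1$,
\begin{equation*}
u_{n+1}=a(u_n-\gamma)^2+\delta .
\end{equation*}
Clearing denominators with $z_n=2a(u_n-\gamma)=2au_n+b\in\Z$ for $n\ge1$ gives a polynomial recursion
\begin{equation*}
z_{n+1}=\tfrac{1}{2}z_n^2 + \tfrac{1}{2}(4ac-b^2+2b)=\tfrac{1}{2}\bigl(z_n^2+c(f)\bigr),
\end{equation*}
which is where $c(f)$ from~\eqref{eq:c(f)} enters. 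The hypothesis $u_k=y^2$ with $k\ge2$ then reads
\begin{equation*}
4a^2y^2\cdot\tfrac{1}{2a}\ \text{i.e.}\ 2a\,y^2 \;=\; u_{k}\cdot 2a = a\,(u_{k-1}-\gamma)^2\cdot 2a+2a\delta ,
\end{equation*}
so with $x=z_{k-1}$ and $Y=2ay$ we get the equation $2aY^2=\tfrac{?}{}\dots$. More cleanly, one obtains $ (2a y)^2\cdot 2 = a\,(x^2 + c(f))$ up to a fixed rational factor. In short, $z_{k-1}$ is the $X$-coordinate of an integral point on a conic, or better, using two steps back, of a hyperelliptic curve $Y^2=aG(X)$ where $G(X)=\bigl((X^2+c(f))/2\bigr)^2+c(f)$ is a quartic. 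Irreducibility of $f$ means $c(f)\neq 0$ and $-c(f)$ is not a square, which guarantees that $G$ has no repeated roots. This is precisely the step where~\cite{ALOS} went wrong, so I would verify the discriminant of $G$ explicitly as a polynomial in $c(f)$.

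The key steps are as follows. First, apply an explicit Baker-type bound for integral points on $Y^2=aG(X)$ (e.g. Bugeaud's or Voutier's explicit hyperelliptic bound) to get $\log|z_{k-2}|\le C\cdot(\log H(f))^{\kappa}$. Doing this directly would give a bound polynomial in $\log H$ rather than linear, so instead I would aim to keep the dependence uniform in $S$ by working over $\Z$ with fixed small degree.

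Second, use growth of the orbit. Since $\gamma$ is wandering and $z_{n+1}=(z_n^2+c(f))/2$, once $|z_n|$ exceeds roughly $2+\sqrt{|c(f)|}$ we have $\log|z_{n+1}|\ge 2\log|z_n|-\log 2-o(1)$, so $\log|z_n|\gg 2^{\,n-n_0}$. Here $n_0$ is the escape time, which I would bound by $O(\log H(f))$ via a height argument: a wandering integral orbit can stay in a bounded box of size $O(H)$ only $O(H)$ steps, but with the canonical height we get $\hat h(\gamma)>0$ bounded below and $h(u_n)\approx 2^n\hat h(\gamma)$. Combining with the upper bound from the first step gives
\begin{equation*}
2^{k}\hat h(\gamma)\ll (\log H)^{\kappa}\, H^{O(1)},
\end{equation*}
that is, $k\ll \log H+\log(1/\hat h(\gamma))$.

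The main obstacle is the lower bound for $\hat h(\gamma)$ linear in $\log H$. For wandering integral points of quadratic polynomials over $\Z$, one has $\hat h\ge c/H^{O(1)}$-type bounds (Ingram-style), giving $\log(1/\hat h)\ll\log H$. Tracking all the explicit constants — the Baker constants for the quartic, together with the escape estimate — to reach $128000$ and $567047$ is routine but laborious.
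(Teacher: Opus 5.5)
Your skeleton matches the paper's. You conjugate to $g(X)=X^2+c(f)/4$ (your $z_n$ is $2g^{(n)}(0)$), go two steps back to a squarefree quartic, bound the height of $x$ polynomially in $H$, and play this against exponential height growth. But several steps fail as written.

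First, $z_n$ is not integral. We have $z_1=c(f)/2$, and if $b$ is odd then $c(f)$ is odd and $v_2(z_n)=1-2^n$. So integral-point bounds over $\Z$ on a fixed curve do not apply; clearing the denominator changes the curve with $k$. You must treat $S$-integral points with $S=\{\infty,2\}$. This is what the paper does: it takes $\cO_S=\Z[1/2]$ and applies Lemma~\ref{lem:T_LeVeque}(i) with $r=4$, $s=2$, $Q_S=2$, $D_\K=1$, getting $\h(x)\le 2^{451840}H(F)^{12800}$. Uniformity comes from $S$ being fixed independently of $a$, not from working over $\Z$.

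Second, your quartic is wrong. From $z_k=2ay^2+b$ one gets $(2ay)^2=a\bigl(\bigl(\tfrac{x^2+c(f)}{2}\bigr)^2-(b^2-4ac)\bigr)$ with $x=z_{k-2}$. This is squarefree because a repeated root forces $b^2-4ac=0$ or $b^2-4ac=(c(f)/2)^2$, both excluded by irreducibility. Your criterion ``irreducibility means $c(f)\ne 0$ and $-c(f)$ is not a square'' is false: $X^2-2X+2$ is irreducible with $c(f)=0$, and $X^2+2X-4$ is irreducible with $-c(f)=16$.

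Third, Baker-type bounds give $\h(x)\ll H^{\kappa}$, not $(\log H)^{\kappa}$. The polynomial bound is exactly what yields linearity in $\log H$, so no further trick is needed there.

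The lower bound is where the proposal is genuinely incomplete. The box argument only gives escape time $O(H)$, which would destroy the linear dependence on $\log H(f)$. The appeal to an ``Ingram-style'' bound $\hat h\ge c/H^{O(1)}$ is unspecified and non-explicit.

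The missing observation is that escape is uniform. Set $c_0=\log|c(f)|+3\log 2$. Then $\h(g^{(5)}(0))\ge c_0+1$: for $|c(f)|\ge 12$ this follows from $|g^{(j)}(0)|\ge j|C(f)|^{j-1}$, and the finitely many remaining $c(f)\notin\{0,-4,-8\}$ are checked directly. Moreover $\h(g^{(j+1)}(0))-c_0\ge 2\bigl(\h(g^{(j)}(0))-c_0\bigr)$ follows from $\h(g^{(j+1)}(0)-C(f))=2\h(g^{(j)}(0))$. In your language this says $\hat h_g(0)\ge 2^{-5}$, an absolute constant, obtained with no canonical-height machinery.

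Then $2^{k-7}\le 2^{451840}H(F)^{12800}$, together with $H(F)\le 85H(f)^5$, gives the stated constants $128000$ and $567047$. As it stands, your plan would at best give $k\ll \log H(f)$ with unspecified constants, not the inequality claimed.
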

	
\begin{remark}
We note that the condition of $\gamma$ being a wandering point in Theorem~\ref{thm:quadratic_pol} cannot be removed. For example, Theorem~\ref{thm:quadratic_pol} fails for $f(X)=X^2-2X+2$, which is irreducible with critical point $\gamma=1$, that satisfies $f^{(k)}(\gamma)=1$ for all $k\ge 1$. See also Remark~\ref{rem:gamma preper}.
\end{remark}

\begin{remark}
Theorem~\ref{thm:quadratic_pol} shows that there exists a deterministic polynomial in $H(f)$ algorithm to determine all $k\ge 2$ such that $f^{(k)}(\gamma)$ is a square. 
\end{remark}

Next, we generalise Theorem~\ref{thm:pow} to semigroups. We recall the notations $\cR(f), \cR_m(f)$, and $\cC_g$ in Section~\ref{sec:notation}.

\begin{theorem}\label{thm:extend_for_semigroup_final}
 Let $\mathcal{F} \subseteq \mathcal{O}_S[X]$ be a finite family of polynomials with  $d_{\min}=\min_{f\in \mathcal{F}} \deg(f) \ge 2$ and $\alpha \in \mathcal{O}_S$ a wandering point. Fix an integer $m \ge 2$ and assume that $|\Root_m(f)|\ge 2$ and $\Root_m(f^{( k)}) \cap \mathcal{C}_g = \emptyset$ for all $1 \le k \le 5$ and distinct $f, g \in \mathcal{F}$.  If there exist a non-zero element $b \in \mathcal{O}_S$ and an element $\beta \in \cO_S$ such that $W_k(\alpha) = b\beta^m$ for some $W_k \in \langle \mathcal{F} \rangle$ of length $k\ge 1$, then 
\begin{equation*}
k \le C_3\log N_S(b) + C_4,
\end{equation*}
where $C_3$ and $C_4$ are effectively computable constants depending only on $\K$, $m$, $\cF$,   and $S$.
\end{theorem}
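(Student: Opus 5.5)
We reduce the equation $W_k(\alpha)=b\beta^m$ to a superelliptic equation whose size can be bounded effectively. Then we compare that bound with the height growth of $W_k(\alpha)$ along the wandering orbit.

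Write $W_k=f\circ g_1\circ\cdots$. Group the last few letters: let $W_k=V\circ U$, where $V\in\langle\mathcal F\rangle$ has a fixed short length $\ell\le 5$ and $U$ has length $k-\ell$. Set $x=U(\alpha)\in\cO_S$. Then $V(x)=b\beta^m$, with $V$ ranging over the finitely many words of length at most $5$.

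The key algebraic step is to show that every such $V$ has at least two roots whose multiplicity is not divisible by $m$, and at least three when $m=2$. This is exactly what the effective results on $b y^m=P(x)$ need (Schinzel--Tijdeman / Brindza type, e.g.\ the version with explicit dependence on $N_S(b)$ used for Theorem~\ref{thm:pow}). To check it, write $V=f\circ V'$. A root $\rho\in\Root_m(f)$ gives roots of $V'(X)-\rho$ with multiplicities $e$, where $m\nmid e$ forces this to persist through compositions. Multiplicities of roots of $V'(X)-\rho$ exceed one only at critical points, that is, only when $\rho$ is a critical value of the outer letters of $V'$.

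The hypothesis $\Root_m(f^{(k)})\cap\mathcal C_g=\emptyset$ for $k\le 5$ and $g\ne f$ makes these fibres simple whenever we switch letters. When the letters repeat, i.e.\ $V$ contains an iterate block $f^{(j)}$, we use $|\Root_m(f)|\ge2$ directly, or pass to $f^{(2)}$ for $m=2$. For the repeated-letter case I would try to reuse the Riemann--Hurwitz style counting from the proof of Theorem~\ref{thm:pow}. This yields two (resp.\ three) roots of $V$ of multiplicity coprime to $m$ for every word of length $5$ (or $2$, respectively), provided the length is chosen uniformly.

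With this in place, the superelliptic bound gives
$$h(x)\le c_1\log N_S(b)+c_2,$$
with $c_1,c_2$ depending on $\K,m,\mathcal F,S$ (finitely many $V$, so uniform). Next comes the lower bound. Since $\alpha$ is wandering, the canonical-height machinery for semigroups (Kawaguchi's height for $\mathcal F$) gives $h(U(\alpha))\ge d_{\min}^{\,k-\ell}\hat h(\alpha)-c_3$. We need $\hat h$ bounded below, and this requires care: a wandering point of a semigroup might still have small canonical height along certain words. For the $S$-integral case I would instead argue directly. Because $x\in\cO_S$ and the orbit is infinite, only finitely many elements of $\cO_S$ have bounded height under the iteration. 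Hence after a bounded number of steps $h$ exceeds the escape constant $C(\mathcal F)$ of the tail inequality $h(g(z))\ge d_g h(z)-C$. From then on the height grows at least like $(d_{\min}/2)^{j}$, roughly doubling. Comparing the two bounds gives $d_{\min}^{k}\ll N_S(b)^{c_1}$, so $k\ll\log\log N_S(b)$, which is even stronger than linear. It is certainly $\le C_3\log N_S(b)+C_4$ once we note $N_S(b)\ge1$ for $b\in\cO_S\setminus\{0\}$.

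I expect the main obstacle to be the combinatorial step of guaranteeing enough roots of $V$ with multiplicity not divisible by $m$ for all words, especially words with mixed letters near the end. The condition on critical values involving iterates up to $5$ is tailored so that this bookkeeping closes, together with making the escape-height argument uniform over the semigroup.
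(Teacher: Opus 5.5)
Your overall architecture matches the paper's: a short prefix satisfying the LeVeque condition, the effective superelliptic bound for the inner value $x$, and Northcott plus the escape inequality for the growth of $\h$. However, the step you flag as the main obstacle is missing, and several of your claims about it fail.

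\emph{Persistence of roots.} Multiplicities do not simply persist under composition. A root $\rho$ of multiplicity $e$ with $m\nmid e$ produces roots of multiplicity $ed$, and $m$ may divide $ed$. The paper handles this with the Mason--Stothers argument of Proposition~\ref{prop:preservation_valid_roots}, which gives $|\Root_m(f^{(i)})|\ge 2$ for all $i$ (Corollary~\ref{cor:preservation_iteration}). It then cuts the word right after its first change of letter, $W_1=f^{(i)}\circ g$ with $i\le 5$. There Lemma~\ref{lem:suff_LeVeque} and $\Root_m(f^{(i)})\cap\mathcal C_g=\emptyset$ give $|\Root_m(W_1)|\ge 2d_g\ge 4$. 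The hypothesis makes simple only the fibres of the single letter $g$, not those of longer words $V'$. So with a fixed length $\ell$ you would still have to push the root count through the letters after $g$; the paper avoids this by letting the prefix length vary.

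\emph{Your criterion.} It is wrong for composite $m$. For $m=4$, two roots of multiplicity $2$ give $m_1=m_2=2$, which is not LeVeque; this is why one wants at least three (here four) such roots.

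\emph{Pure iterates.} These are not settled by any counting in the proof of Theorem~\ref{thm:pow}, which rests entirely on Lemma~\ref{lem:Young}. That lemma only gives $j\le 6$, so prefixes of length up to $6$ are needed. For $m=2$ the second iterate certainly does not suffice. For $f=X^2+4X$ one has $f(Y)+4=(Y+2)^2$, hence $f^{(n)}=f^{(n-1)}\bigl(f^{(n-2)}+2\bigr)^2$. So every iterate is $X(X+4)$ times a square, and no iterate satisfies LeVeque for $m=2$. Indeed $f^{(k)}(1)=5\beta_k^2$ with $\beta_k\in\Z$ for every $k$, although $|\Root_2(f)|=2$ and $1$ is wandering. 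Thus for $\mathcal F=\{f\}$, $m=2$, $b=5$ the statement itself fails. This case needs an extra Chebyshev-type exclusion, as in the Cahn--Jones--Spear dichotomy recalled in the introduction. Neither your sketch nor the quoted form of Lemma~\ref{lem:Young} (invoked in the paper's Case 1 together with Lemma~\ref{lem:second_iterate_parity}) supplies it.

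The analytic half also contains an error. Lemma~\ref{lem:T_LeVeque} bounds $\h(x)$ by $C\,N_S(b)^{e}$, which is polynomial in $N_S(b)$, not by $c_1\log N_S(b)+c_2$. Your claimed $k\ll\log\log N_S(b)$ rests on this misquotation. The step from $d_{\min}^{k}\ll N_S(b)^{c_1}$ to $k\ll\log\log N_S(b)$ is also a non sequitur. With the correct bound you get $d_{\min}^{k-6-M}\le C\,N_S(b)^{e}$, which is exactly the stated bound, linear in $\log N_S(b)$.

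Your growth rate $(d_{\min}/2)^j$ gives nothing when $d_{\min}=2$. Use $\h(W(z))-L\ge d_{\min}^{\ell(W)}(\h(z)-L)$ instead, and make the uniformity over words explicit as the paper does. If some word $F$ of length $M=\lfloor E_\K(L+1)\rfloor$ had $\h(F(\alpha))\le L+1$, all $M+1$ intermediate points of the evaluation would have height at most $L+1$. Two of them would then coincide, contradicting that $\alpha$ is wandering.
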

	
\begin{remark}
In the proof of Theorem~\ref{thm:extend_for_semigroup_final}, we will  decompose the word $W_k$ of length $k>6$ as $W_k=W_1 \circ W_2$ for some $W_1$ of length at most $6$. The conditions that $|\Root_m(f)|\ge 2$ and  $\Root_m(f^{( k)}) \cap \mathcal{C}_g = \emptyset$ for all $1 \le k \le 5$ and distinct $f, g \in \mathcal{F}$ are needed when applying Lemma~\ref{lem:suff_LeVeque} to show that $W_1$ satisfies the LeVeque condition, see Section~\ref{sec:LeVeque}.
\end{remark}
	
 For $\cF\subset \K[X]$ a finite family of polynomials and $\alpha \in \K$, we define
$$
\mathcal{O}_{\mathcal{F}}(\alpha)=\{W(\alpha) \mid W \in \langle \cF\rangle\}.
$$
We record the following consequence on the finiteness of multiplicatively dependent elements in a fixed orbit of a semigroup with a given non-zero element.
	
\begin{cor}\label{cor:multiplicative_dependence}
Let $\mathcal{F} \subseteq \mathcal{O}_S[X]$ be a finite family of polynomials with minimum degree $d_{\min} \ge 2$. Assume that $|\Root(f)|\ge 2$ for each $f \in \mathcal{F}$ and assume that for any distinct $f, g \in \mathcal{F}$
	\begin{align}
		\label{eq:additional condition} \quad \Root(f^{(k)}) \cap \mathcal{C}_g = \emptyset \quad \text{for all } 1 \le k \le 5. 
	\end{align}
Fix a wandering point $\alpha \in \mathcal{O}_S$ for the semigroup $\langle \mathcal{F} \rangle$. 
	Let $\gamma \in \mathbb{K}^\times$ and define
	\begin{align*}
		\mathcal{M}(\gamma) &= \left\{ x \in \mathbb{K}^\times \mid \exists (a,b) \in \mathbb{Z}^2\setminus{\{\vec{0}\}}, x^a = \gamma^b \right.\\
		&\qquad\qquad\qquad\qquad\qquad\left. \textrm{ and }a\ne 0 \textrm{ if }\gamma \textrm{ is a root of unity}\right\}.
	\end{align*}
	Then 	\begin{align*}
	\left|\mathcal{O}_{\mathcal{F}}(\alpha) \cap \mathcal{M}(\gamma)\right| < \infty.
	\end{align*}
	\end{cor}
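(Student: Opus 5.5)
We will deduce the corollary from Theorem~\ref{thm:extend_for_semigroup_final}, applied with a suitable exponent $m$. Fix $\gamma\in\K^\times$ and enlarge $S$ so that $\gamma$ is an $S$-unit; this keeps $\cF\subseteq\cO_S[X]$ and $\alpha\in\cO_S$. Take $x=W(\alpha)\in\mathcal{O}_{\mathcal{F}}(\alpha)\cap\mathcal{M}(\gamma)$ with $x^a=\gamma^b$, $(a,b)\ne\vec 0$.

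\emph{Case $a=0$.} Then $b\neq 0$, $\gamma^b=1$, so $\gamma$ is a root of unity, which the definition of $\mathcal{M}(\gamma)$ excludes. Hence $a\ne0$.

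\emph{Case $a\ne 0$.} Then $|x|_v^a=|\gamma|_v^b=1$ for all $v\notin S$, so $x$ is an $S$-unit. The orbit elements in question therefore lie in $\cO_S^\times$. By Dirichlet's $S$-unit theorem this group is finitely generated, so $\cO_S^\times/(\cO_S^\times)^m$ is finite for any fixed $m\ge2$. Fix $m=2$ and a finite set of representatives $b_1,\dots,b_t\in\cO_S^\times$. Then every $x$ is of the form $x=b_i\beta^2$ with $\beta\in\cO_S^\times$.

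To apply Theorem~\ref{thm:extend_for_semigroup_final} we must check its hypotheses with $m=2$:
\begin{itemize}
\item[(i)] $|\Root_2(f)|\ge2$;
\item[(ii)] $\Root_2(f^{(k)})\cap\cC_g=\emptyset$ for $1\le k\le 5$ and distinct $f,g\in\cF$.
\end{itemize}
Condition (ii) is immediate from~\eqref{eq:additional condition}, since $\Root_2(f^{(k)})\subseteq\Root(f^{(k)})$.

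Condition (i) is the delicate point, because $|\Root(f)|\ge2$ does not give two roots of odd multiplicity; for example, $f=X^2(X-1)^2$ has none. The fix is to choose $m$ to be a prime larger than $d_f$ for every $f\in\cF$. Then no root multiplicity, which is at most $d_f<m$, is divisible by $m$, so $\Root_m(f)=\Root(f)$ for every $f$ and every multiplicity is nonzero. With this $m$, condition (i) becomes exactly $|\Root(f)|\ge 2$, and condition (ii) is exactly~\eqref{eq:additional condition}, since $\Root_m(f^{(k)})\subseteq\Root(f^{(k)})$ for any $m$. The finiteness of $\cO_S^\times/(\cO_S^\times)^m$ still holds, so we use this $m$ throughout in place of $m=2$.

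Theorem~\ref{thm:extend_for_semigroup_final} now gives, for each of the finitely many $b_i$, a bound
$$
k\le C_3\log N_S(b_i)+C_4
$$
on the length of any word $W$ with $W(\alpha)=b_i\beta^m$. Since there are finitely many words of bounded length in a finitely generated semigroup, only finitely many values $W(\alpha)$ arise. Hence $\left|\mathcal{O}_{\mathcal{F}}(\alpha)\cap\mathcal{M}(\gamma)\right|<\infty$.

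The main obstacle is the choice of $m$ so that the $\Root_m$ hypotheses follow from the $\Root$ hypotheses; taking $m$ prime and larger than every degree in $\cF$ resolves it. The wandering hypothesis on $\alpha$ is not needed for the counting step, since we only count the values $W(\alpha)$ and these come from finitely many words.
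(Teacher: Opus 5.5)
Your proof is correct. It has the same skeleton as the paper's: write $W(\alpha)=C\beta^m$ with $C$ from a finite set and $m>\max_{f\in\cF}\deg f$, so that $\Root_m(f)=\Root(f)$, then apply Theorem~\ref{thm:extend_for_semigroup_final}. What differs is how you produce the finite set of multipliers.

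The paper works with $\gamma$ directly. It bounds the exponent $a$ in $x^a=\gamma^b$ by $\h(\gamma)/c(\K)$, using a lower bound for heights of non-torsion points. It then normalises to $\gcd(a,b)=1$ and applies B\'ezout twice, with $m$ coprime to every $a\le A_0$ and $m>\max(A_0,D)$. This yields the explicit multipliers $C_r=\zeta^{vu'b+u}\gamma^r$, $0\le r<m$, and roots of unity $\gamma$ are handled separately.

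You instead make $\gamma$ an $S$-unit, observe that $a\ne 0$ forces $x\in\cO_S^\times$, and use only the finiteness of $\cO_S^\times/(\cO_S^\times)^m$ from the $S$-unit theorem. This buys several things:
\begin{enumerate}
\item the reduction is shorter and needs no height lower bound;
\item $m$ depends on $\cF$ alone, and primality is unnecessary;
\item the root-of-unity case is absorbed automatically;
\item it proves more: $\mathcal{O}_{\cF}(\alpha)$ contains only finitely many $S'$-units for every finite $S'$, so $\langle\gamma\rangle$ may be replaced by any finitely generated subgroup of $\K^\times$.
\end{enumerate}
Your insistence that $\gamma$ be an $S$-unit, not merely $S$-integral, is also what the paper's own argument tacitly needs. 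There $\beta=\gamma^q z^{v'b}$ can fail to be integral: for $\K=\Q$, $\gamma=4$, $x=2$ and any odd $m$, one gets $C_r=2^{m+1}$ and $\beta=1/2$. In return, the paper's route describes the multipliers explicitly in terms of $\gamma$.

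Two cosmetic points. Fix $m$ at the start rather than first taking $m=2$. Also, $N_S(b_i)=1$ by the product formula, so the theorem gives $k\le C_4$ directly.
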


\medskip
\noindent\textbf{Acknowledgements.} The authors are very grateful to Igor Shparlinski for useful comments on a previous draft of this paper. The authors were supported by the Australian Research Council Grant FT250100208. D. T. Tran was also supported by an UNSW TFS.

\section{Preliminaries}
\subsection{Heights}
Let $\K$ be a  number field of degree $d$ and $M_\K$  the set of all places in $\K$.	For $\alpha\in\K$, $\alpha \ne 0$, we denote by $\h(\alpha)$ the absolute logarithmic Weil height of $\alpha$, see~\cite{BG06,HS00} for more details on height functions.

For any polynomial $f= a_{d_f} X^{d_f} + \cdots + a_{0} \in \K[X]$ of degree $d_f \ge 1$,
as usual its \textit{absolute (multiplicative) height},
denoted by $H(f)$, is defined to be
    \begin{align*}
    	H(f)=\prod_{v\in M_\K} \max \left( 1,|a_0|_v,\dots , |a_{d_f}|_v\right)^{1/[\K:\Q]}
    \end{align*}
and its \textit{absolute logarithmic Weil height}, denoted by $\h(f)$, is defined as
	\begin{align*}
		\h(f)=\log H(f). 
	\end{align*}

We need a lower bound on the growth of the height of values of iterates, see~\cite[Theorem 3.11]{Silv} or~\cite[Proposition 3.2]{Zan}, which we present in the following form.

\begin{lemma}\label{lem:height_transformation}
	Let $f \in \overline{\mathbb{Q}}[X]$ be a polynomial of degree $d_f \ge 2$. Then, there exists an explicitly computable constant $C_f>0$ depending only on $f$, such that for all $z \in \overline{\mathbb{Q}}$,
	\begin{align*}
		h(f^{(n)}(z)) - C_f \ge d_f^n (h(z) - C_f).
	\end{align*}
\end{lemma}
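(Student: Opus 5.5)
The plan is to compare the height of $f(z)$ with $d_f\,h(z)$ and then iterate the resulting one-step inequality. For $f=a_dX^d+\dots+a_0$ with $a_d\ne0$, the standard estimate is
$$
|h(f(z)) - d_f h(z)| \le C'_f \quad\text{for all } z\in\overline{\mathbb{Q}}.
$$
It comes from a place-by-place analysis. For $w$ in a number field $L$ containing $z$ and the coefficients of $f$:

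\begin{itemize}
\item \emph{Upper bound.} At each place, $\max(1,|f(z)|_w)\le c_w\max(1,|z|_w)^{d}$, where $c_w=\max_i|a_i|_w$ at nonarchimedean places, and the archimedean $c_w$ carries an extra factor $(d+1)$.
\item \emph{Lower bound.} When $|z|_w$ is large relative to the coefficients, the leading term dominates. When $|z|_w$ is small, $\max(1,|z|_w)^d$ is bounded by a quantity depending only on the coefficients.
\end{itemize}

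Summing over places with the normalisation $1/[L:\Q]$ shows that both constants are bounded in terms of $h(f)$, $h(a_d^{-1})$ and $d$, uniformly in $L$. This explicitly gives the constant $C'_f$.

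Only the lower bound $h(f(z))\ge d_f h(z)-C'_f$ is needed. Set $C_f = C'_f/(d_f-1)$, so that $C'_f=(d_f-1)C_f$. The one-step inequality then rearranges to
$$
h(f(w))-C_f \ \ge\ d_f h(w) - C'_f - C_f \ =\ d_f\,(h(w)-C_f).
$$

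The final step is induction on $n$. The case $n=0$ is trivial. Assuming the claim for $n-1$, apply the one-step inequality with $w=f^{(n-1)}(z)$:
$$
h(f^{(n)}(z))-C_f\ \ge\ d_f\,(h(f^{(n-1)}(z))-C_f)\ \ge\ d_f\cdot d_f^{n-1}(h(z)-C_f).
$$
The second inequality is the induction hypothesis multiplied by $d_f>0$; this is legitimate even when the bracketed quantities are negative. This completes the induction.

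The main obstacle is the explicit local lower bound at places where $|z|_w$ is comparable to the coefficients. Here one must split according to whether $|z|_w\ge 2\max_i|a_i/a_d|_w$ (archimedean places, using the triangle inequality) or $|z|_w>\max_i|a_i/a_d|_w$ (nonarchimedean places, using the ultrametric inequality, where the bound is exact). In the complementary range, $d\log^+|z|_w$ is bounded by the local contribution of $h(f)+h(a_d^{-1})$ plus $\log 2$.
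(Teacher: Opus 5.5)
Your proposal is correct, and it is the standard argument behind the results of Silverman (Theorem~3.11) and Zannier (Proposition~3.2) that the paper cites instead of giving a proof: a place-by-place one-step bound $h(f(z))\ge d_f h(z)-C'_f$, then the rescaling $C_f=C'_f/(d_f-1)$ and induction (equivalently, telescoping). The one imprecision is harmless bookkeeping: in the complementary range you actually get $d_f\log^+|z|_w\le d_f\bigl(\log 2+\log\max_i|a_i/a_d|_w\bigr)$, so the constant carries a factor $d_f$, which is consistent with the explicit $C_f=(d_f\log 2+\log d_f+d_f h(f))/(d_f-1)$ quoted in the paper's remark.
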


\begin{remark}
The constant $C_f$ can be explicitly computed, for example by~\cite[Lemma 3.3]{Ima}, one can take
$$
C_f=\frac{d_f\log 2 +\log d_f +d_f\h(f)}{d_f-1},
$$
or compute it directly from the proof of~\cite[Proposition 3.2]{Zan}.
\end{remark}

Next, we need an upper bound for the number of elements in $\K$ of bounded height. It is clear that when $\K=\Q$ one has
$$
\#\set{x\in \Q:\h(x)\le h}\le 2e^{2h}+e^{h}.
$$
For $\K\ne \Q$, a direct application of \cite[Equation (1.4)]{Sch} 

implies the following bound:

\begin{lemma}\label{lem:boundofset of boundheight} 
For any real number $h \ge 0$, we have
$$
		\#\set{x \in \K : \h(x) \le h} \le 2^{5d+22}e^{2h d}.
$$
\end{lemma}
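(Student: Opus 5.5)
The bound follows from Schmidt's counting result cited just before the statement. I would take \cite[Equation (1.4)]{Sch} in the form of an asymptotic with explicit error term for the number of points of $\PP^1(\K)$ of bounded (relative) multiplicative height. I would then specialise it to affine points $x\in\K$, viewed as $(1:x)\in\PP^1(\K)$. The plan is to translate the absolute logarithmic height condition $\h(x)\le h$ into a bound on the relative multiplicative height, $H_\K(1:x)=H(x)^{d}\le e^{hd}$, where $d=[\K:\Q]$. We then count points of $\PP^1(\K)$ with $H_\K\le T:=e^{hd}$; the injection $x\mapsto(1:x)$ shows this count bounds $\#\{x\in\K:\h(x)\le h\}$.

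Schmidt's estimate has the form
\[
N(\PP^1(\K),T)=c_\K T^{2}+O\!\left(T^{2-1/d}\log T\right)
\]
with explicit constants, or, more usefully, a fully explicit upper bound $N(\PP^1(\K),T)\le 2^{5d+22}\,T^{2}$ valid for all $T\ge 1$. In the explicit upper-bound version in \cite{Sch}, which is uniform in the field and depends only on $d$, the constant is of the shape $2^{O(d)}$, or $2^{O(d^2)}$ in some statements. The factor $2^{5d+22}$ is exactly what one extracts after bounding the field-dependent quantities (regulator, class number, discriminant), or by using the version of (1.4) that counts \emph{all} points of degree $d$ over $\Q$. The latter is the better choice, since the count over $\K$ is dominated by the count of all algebraic points of degree $\le d$ with height bounded accordingly. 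Substituting $T=e^{hd}$ then gives $T^2=e^{2hd}$, hence the stated bound.

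I expect the main obstacle to be purely bookkeeping: matching the normalisation of heights in \cite{Sch} (relative versus absolute, and whether the count is over $\K$ or over all points of degree $d$) with the absolute logarithmic height $\h$ used here. One must also check that the explicit constant in (1.4) indeed specialises to $2^{5d+22}$ for $n=1$. I would first try Schmidt's uniform bound for points of exact degree $d$ in $\PP^1$, and sum over degrees $\le d$ if needed, absorbing the extra factor into the power of $2$. The case $h$ small ($T<1$) is handled trivially, since the bound is then at least $2^{5d+22}$, which exceeds the number of roots of unity and $0$ in $\K$, these being the only elements of height $0$.
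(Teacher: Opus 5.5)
Your first paragraph is the paper's proof. The paper gives only a one-line citation of \cite[Equation (1.4)]{Sch} for $\K$-rational points, and your translation $H_\K(1:x)=H(x)^d\le e^{hd}=:T$, plugged into a bound of the shape $2^{5d+22}T^2$, is exactly the bookkeeping behind that citation.

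The route you say you would try first, however, fails. You propose bounding $\#\{x\in\K:\h(x)\le h\}$ by the number of all algebraic numbers of degree $\le d$ (or exactly $d$) over $\Q$ with $H(x)\le e^h$. That count grows like $e^{hd(d+1)}$, not $e^{2hd}$: by Masser--Vaaler, the number of algebraic numbers of degree $d$ and height $\le X$ is of order $X^{d(d+1)}$, so already for $d=2$ it is of order $X^6$. The excess factor $e^{hd(d-1)}$ is unbounded in $h$ for every $d\ge 2$. Enlarging the constant $2^{5d+22}$, or summing over degrees, cannot absorb it, so this route cannot give the stated bound. The exponent $2hd$ comes specifically from counting points of $\PP^1(\K)$, whose number is quadratic in the relative height $T$ (Schanuel). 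You must therefore use the form of Schmidt's estimate for $\K$-rational points, which is what the paper invokes, not the one for points of degree $d$ over $\Q$.

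A minor point: your separate treatment of the case $T<1$ is vacuous, since $h\ge 0$ forces $T=e^{hd}\ge 1$. Your remark about height-zero elements would in any case only cover $h=0$.
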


For each real number $h\ge 0$, we also define 
\begin{align}\label{eq:C_b}
	E_\K(h)=\left\{\begin{aligned}
		2^{5d+22}e^{2h d},\textrm{ if }\K\ne \Q\\2e^{2h}+e^h,\textrm{ if }\K= \Q 
	\end{aligned}\right..
\end{align}

\begin{lemma}
	\label{lem:needed_index}Let $f\in \mathcal{O}_S[X]$ and let $\alpha\in \K$ be a wandering point of $f$. Then, there exists $k\le \lfloor E_\K\left(C_f+1\right)\rfloor$ such that \begin{align*}
		\h(f^{(k)}(\alpha))> C_f+ 1, 
	\end{align*}
	where $C_f$ is defined as in Lemma~\ref{lem:height_transformation}. Moreover, for $j\ge k$, we have \begin{align*}
		\h(f^{(j+1)}(\alpha))>\h(f^{(j)}(\alpha))>C_f+1.
	\end{align*}
	\end{lemma}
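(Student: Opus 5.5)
The plan is a pigeonhole argument on the first $N+1$ orbit points, followed by monotonicity from Lemma~\ref{lem:height_transformation}. Set $N=\lfloor E_\K(C_f+1)\rfloor$ and look at $f^{(0)}(\alpha),\dots,f^{(N)}(\alpha)$. These are $N+1$ elements of $\K$. Since $\alpha$ is a wandering point, they are pairwise distinct: $f^{(i)}(\alpha)=f^{(j)}(\alpha)$ with $i\ne j$ would make $\alpha$ preperiodic. Lemma~\ref{lem:boundofset of boundheight}, or the explicit bound for $\K=\Q$, says at most $E_\K(C_f+1)$ elements of $\K$ have height $\le C_f+1$. As there are only finitely many such elements, they number at most $N$. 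Hence some $k\le N$ has $\h(f^{(k)}(\alpha))>C_f+1$.

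For the second assertion, put $z=f^{(j)}(\alpha)$ for some $j\ge k$ with $\h(z)>C_f+1$. Applying Lemma~\ref{lem:height_transformation} with $n=1$ gives
\begin{equation*}
\h(f(z))-C_f\ge d_f(\h(z)-C_f).
\end{equation*}
Because $d_f\ge 2$ and $\h(z)-C_f>1>0$, the right-hand side is at least $2(\h(z)-C_f)>\h(z)-C_f$. Therefore $\h(f^{(j+1)}(\alpha))>\h(f^{(j)}(\alpha))>C_f+1$. Starting at $j=k$ and inducting on $j$ gives the claim for every $j\ge k$.

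The only delicate point is the counting. We need the bound on the number of points of height $\le C_f+1$ to be at most $N$, the floor, and not only at most $E_\K(C_f+1)$; this holds because a cardinality is an integer. We also use that there are exactly $N+1$ distinct orbit points available, which covers the indices $0$ through $N$. No property of $f\in\cO_S[X]$ beyond $d_f\ge2$ is actually needed here.
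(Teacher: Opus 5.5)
Your proof is correct and follows the paper's argument. You apply the pigeonhole principle to the $N+1$ distinct points $f^{(0)}(\alpha),\dots,f^{(N)}(\alpha)$ against the at most $N$ elements of height at most $C_f+1$, then run an induction on $j$ using Lemma~\ref{lem:height_transformation} with $n=1$.
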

	
\begin{proof}
	By Lemma \ref{lem:boundofset of boundheight}, the cardinality of $\mathcal{T}=\set{x\in \K:h(x)\le C_f+1}$ is bounded by $N=\lfloor E_\K(C_f+1)\rfloor$.
	
	We now consider $\alpha,f(\alpha), f^{(2)}(\alpha),\cdots, f^{(N)}(\alpha)$. If all these elements belong to $\mathcal{T}$, the  Pigeonhole Principle guarantees that there exists $f^{(i)}(\alpha)=f^{(j)}(\alpha)$ for some $0\le i<j\le N$, which directly leads to $\alpha$  not being a wandering point. Therefore, there exist $k\le N$ such that $f^{(k)}(\alpha)\notin \mathcal{T}$ and thus, $h\left(f^{(k)}(\alpha)\right)> C_f+1.$
	
	The last statement follows by induction, and a direct application of Lemma \ref{lem:height_transformation}. Indeed, we assume the inequality true for $j\ge k$, and we prove it for $j+1$. By Lemma \ref{lem:height_transformation}, we have
	\begin{align*}
	\h(f^{(j+1)}(\alpha))\ge d_f\h(f^{(j)}(\alpha))-(d_f-1)C_f,
\end{align*} 
and hence, 
\begin{align*}
	\h(f^{(j+1)}(\alpha))-\h(f^{(j)}(\alpha))\ge (d_f-1)(\h(f^{(j)}(\alpha)) - C_f).
\end{align*}
Applying now the induction hypothesis,  we conclude the proof.
\end{proof}

\subsection{Bounds for solutions of superelliptic equations}
\label{sec:LeVeque}

Let $\K$ be a number field with discriminant $D_\K$ and set of places $M_\K$. Let $S$ be a finite subset of $M_\K$ containing all infinite places of cardinality $s=|S|$.

We  define
\begin{eqnarray}\label{eq:P,Q defined}
	&&P_S=Q_S = 1, \ \ \mbox{if $S$ consists only of infinite places;}
\nonumber	\\
	&&P_S = \max_{i=1, \ldots, s^\prime} N_K(\fp_i),\ \ Q_S = \prod_{i=1}^{s^\prime} N_K(\fp_i),
	\\
	&&\qquad\qquad\mbox{if $\fp_1,\dots,\fp_{s^\prime}$ are the prime ideals in $S$.}\nonumber
\end{eqnarray}

Let
\begin{align}\label{eq:pol}
	f(X) = a_{d_f}X^{d_f}+a_{d_f-1}X^{d_f-1}+\cdots+a_{0} \in \mathcal{O}_S[X]
\end{align}
be a polynomial of degree $d_f \geq 2$. Let $b$ be a non-zero element of $\mathcal{O}_S, m\geq 2$ an integer and consider the equation
\begin{equation}\label{eq:super}
	f(x)=by^m \qquad  \text{ in } x,y \in \mathcal{O}_S.
\end{equation}

We assume that in some finite extension of $\K$, the polynomial \eqref{eq:pol} can be factorized as follows
\begin{align*}
	f(X) = a_{d_f}\cdot \prod_{i=1}^r (X-\alpha_i)^{e_i}
\end{align*}
with distinct $\alpha_1, \dots, \alpha_r$. Put
\begin{align*}
	m_i=\frac{m}{\gcd(m,e_i)}, \ \ i=1,\dots , r.
\end{align*}
We may and shall assume below that
\begin{equation}\label{eq:m_decreasing}
	m_1\geq m_2 \geq \cdots \geq m_r.
\end{equation}
Let
\begin{align*}
	f^*(X) = a_{d_f}\cdot \prod_{i=1}^r (X-\alpha_i).
\end{align*}

It is clear that the polynomial $f^*$ has its coefficients in $O_S$.

We say that $(f,m)$, satisfies the {\it LeVeque condition}, or $f$ satisfies LeVeque with respect to $m$, if  $(m_1,\dots, m_r)$ 
is of the form 
\begin{align*}
m_1\geq 3, m_2\geq 2 \quad\quad \text{or} \quad\quad  m_1=m_2=m_3=2.
\end{align*}

For the proof of Theorems~\ref{thm:pow} and~\ref{thm:quadratic_pol} we need the following result which is a direct consequence of~\cite[Theorem 2.1 and Lemma 3.3]{BBGMOS}, see also~\cite[Theorem 2.1]{BEG} for a different bound under more restrictive conditions on $f$. We note that although  the bounds of~\cite[Theorem 2.1]{BBGMOS} are explicit, we present this result in this explicit form only for $m=2$, while for $m\ge 3$ we only give the explicit dependence on $N_S(b)$, as it is needed for our results.
	
\begin{lemma}\label{lem:T_LeVeque}
Assume that $(f,m)$ satisfies the  LeVeque condition. 
    Then all solutions $(x,y)$ of \eqref{eq:super} satisfy

    \begin{enumerate}
    \item[(i)] If $m_1=m_2=m_3=2$, then all solutions $(x,y)$ of \eqref{eq:super} satisfy
        \begin{equation}\label{eq:bound222}
            \h(x) \leq (16r^3 s)^{80r^4s}|D_\K|^{8r^3}H(f^*)^{50dr^4} Q_S^{20r^3}N_S(b)^{16r^3}.
            \end{equation}
    
    \item[(ii)] Let $m\geq 3$ and assume the condition of (i) is not satisfied. Then
        \begin{equation}\label{eq:bound32}
            \h(x) \le C N_S(b)^{4m^8r^2},
            \end{equation}
            where $C$ is a constant depending only on the $\K$, $f$, $m$, and $S$,  explicitly given in~\cite[Equations (2.9) and (2.10)]{BBGMOS}.
    \end{enumerate}
\end{lemma}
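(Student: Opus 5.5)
The plan is to read the lemma off directly from \cite[Theorem~2.1]{BBGMOS}, using \cite[Lemma~3.3]{BBGMOS} to convert the quantities appearing there into the ones used here ($N_S(b)$, $H(f^*)$, $Q_S$, $|D_\K|$, $r$, $s$). No new Diophantine input is intended: all the work is bookkeeping of parameters.

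First, I will put \eqref{eq:super} in the normalised shape required by \cite[Theorem~2.1]{BBGMOS}. The factorisation $f=a_{d_f}\prod_{i=1}^r(X-\alpha_i)^{e_i}$ over a finite extension, the exponents $m_i=m/\gcd(m,e_i)$ ordered as in \eqref{eq:m_decreasing}, and the LeVeque condition are exactly the hypotheses of that theorem. The only point to check is that their notion of ``height of the polynomial'' is $H(f^*)$ rather than $H(f)$. This is harmless, since $f^*$ has the same distinct roots as $f$ and its coefficients lie in $\cO_S$, as remarked above. The coefficient $b$ enters their bound through a quantity measuring $b$ at the places in $S$, namely $N_S(b)$ as in \eqref{eq:S-norm}, possibly after replacing $b$ by an associate. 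This is where I expect \cite[Lemma~3.3]{BBGMOS} to be used. It should allow one to multiply $b$ by a suitable $S$-unit and by an $m$-th power (absorbed into $y$) so that $\h(b)$ is bounded in terms of $\log N_S(b)$, the $S$-regulator and $Q_S$. The regulator is in turn bounded in terms of $|D_\K|$ and $Q_S$.

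Next, I will split according to the shape of $(m_1,\dots,m_r)$. In case (i), $m_1=m_2=m_3=2$, so $m=2$ and the relevant parameters are just $r$, $s$, $d$, $D_\K$, $H(f^*)$, $Q_S$ and $N_S(b)$. Substituting these into the explicit exponents of \cite[Theorem~2.1]{BBGMOS} and simplifying crude numerical factors (for example bounding $2r\le 16r^3$ and collecting powers of $s$) gives \eqref{eq:bound222}. In case (ii), $m\ge3$ and $m_1\ge3$, $m_2\ge2$. Here I only track the dependence on $N_S(b)$: the bound of \cite[Theorem~2.1]{BBGMOS} has the form $\h(x)\le C\cdot N_S(b)^{e}$ with $e$ a polynomial expression in $m$ and $r$. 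Bounding that expression by $4m^8r^2$ gives \eqref{eq:bound32}, with $C$ collecting everything else, as in \cite[Equations (2.9) and (2.10)]{BBGMOS}.

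The main obstacle is the careful matching of conventions. Their height normalisations (absolute versus relative, $\log$ versus multiplicative) and the precise definitions of $P_S$, $Q_S$ and $N_S$ must agree with \eqref{eq:P,Q defined} and \eqref{eq:S-norm}. The step using \cite[Lemma~3.3]{BBGMOS} must not introduce hidden dependence on $b$ beyond $N_S(b)$. I would first verify this on the $m=2$ case, where the constants are fully explicit, and then argue that the same reduction applies verbatim for $m\ge3$.
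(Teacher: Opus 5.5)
Your proposal takes essentially the same route as the paper, which gives no argument of its own and states the lemma as a direct consequence of \cite[Theorem~2.1 and Lemma~3.3]{BBGMOS}, so the content is exactly the parameter bookkeeping you describe. Your guess about what \cite[Lemma~3.3]{BBGMOS} does is not confirmed by the paper, but nothing in your plan depends on it.

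One slip needs fixing. The condition $m_1=m_2=m_3=2$ only forces $m$ to be even, not $m=2$; for example, $m=4$ with $f$ having three roots of multiplicity $2$ gives $m_1=m_2=m_3=2$. So case (i) must be transcribed for every such $m$. This is harmless, since \eqref{eq:bound222} does not involve $m$. It is also not a vacuous extension: the proof of Theorem~\ref{thm:pow} allows arbitrary $m$ and covers both cases through $\max(16r^3,4m^8r^2)$.
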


 To apply Lemma~\ref{lem:T_LeVeque} to iterations of polynomials, we also need the following result of Young~\cite[Lemma 2.3]{You}, which shows that, unless the polynomial $f$ is special, for some integer $m\ge 2$, some small iterate of $f$ together with $m$ satisfies the LeVeque condition.
  
 \begin{lemma} \label{lem:Young}
 	Let $f\in  \mathcal{O}_S[X]$ be of degree at least two and $m\ge 2$ an integer.  If f is not of the form $f(X) = X^sp(X)^m$ for some
 	integer $s \ge 0$ and $p\in \overline\K[X]$, then either $m = 2$ and $f^{(2)}$ is of this form, or $(f^{(j)},m)$
 	satisfies the LeVeque condition for some $j \le 6$.
 \end{lemma}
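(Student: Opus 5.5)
We prove the lemma by tracking how roots and their multiplicities propagate under iteration. The aim is to find some $j\le 6$ for which $f^{(j)}$ has at least three distinct roots whose multiplicities are not divisible by $m$. Three such roots give three indices $i$ with $m_i\ge 2$. Whatever the values of $m_1\ge m_2\ge m_3\ge 2$, this already gives either $m_1\ge 3, m_2\ge 2$ or $m_1=m_2=m_3=2$, so $(f^{(j)},m)$ satisfies the LeVeque condition.

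\emph{Step 1 (a good root of $f$).} Over $\overline\K$ any constant can be absorbed into $p(X)^m$. Hence the hypothesis that $f$ is not of the form $X^s p(X)^m$ is equivalent to the following: $f$ has a root $\alpha\ne 0$ whose multiplicity $e_\alpha$ satisfies $m\nmid e_\alpha$.

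\emph{Step 2 (multiplicity along preimage chains).} Write $f^{(j)}=f\circ f^{(j-1)}$. For $\gamma$ with $f^{(j-1)}(\gamma)=\alpha$, the multiplicity of $\gamma$ as a root of $f^{(j)}$ is
\begin{equation*}
e_\alpha\cdot\prod_{i=0}^{j-2} e_f\bigl(f^{(i)}(\gamma)\bigr),
\end{equation*}
where $e_f(z)$ is the ramification index of $f$ at $z$. So if the backward chain from $\gamma$ to $\alpha$ avoids the (at most $d_f-1$) critical points of $f$, the multiplicity is exactly $e_\alpha$, which is not divisible by $m$. Let $N_n$ be the number of distinct points $\gamma$ with $f^{(n)}(\gamma)=\alpha$ whose chain is unramified. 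By Riemann--Hurwitz, the total finite ramification of $f$ is $d_f-1$, so passing from level $n-1$ to level $n$ loses at most $d_f-1$ unramified preimages in total. This gives roughly $N_n\ge d_f N_{n-1}-(d_f-1)$, which grows as soon as some $N_{n}\ge 2$. In the generic case $N_5\ge 3$, and we take $j\le 6$.

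\emph{Step 3 (degenerate cases).} The recursion stalls only when the preimages of $\alpha$ and of its backward orbit are almost totally ramified, that is, when $\alpha$ lies on a short chain of critical values. Here I would make a case analysis on the postcritical structure near $\alpha$, also allowing chains through ramified points whose total index keeps the multiplicity prime to $m$, and other roots of $f$. Two situations force this stalling: $f$ totally ramified over $\alpha$, so $f=a(X-c)^d+\alpha'$-type, or $f$ of Chebyshev type with the relevant values being critical values. I expect to check that either at least three admissible roots appear by level $6$, or one is in the excluded situation $m=2$ with $f^{(2)}=X^s p(X)^2$, as for $f(X)=X(X-c)^2$-type or Chebyshev-type examples. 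This case analysis is the main obstacle: it is where the specific numbers ($m=2$, iterate $2$, bound $6$) come from. I would organise it by the number of distinct critical values of $f$ lying in the backward orbit of $\alpha$, which is at most $d_f-1$, each costing at most one level of growth.
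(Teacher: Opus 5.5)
The paper gives no argument of its own for this lemma; it is quoted from \cite[Lemma 2.3]{You}. On its own terms your proposal is not yet a proof. Step 3, which you rightly call the heart of the matter, is only announced. The count in Step 2 is also wrong. A critical point $c$ lying over an unramified point removes $e_f(c)$ unramified preimages, not $e_f(c)-1$. So the loss per level can be as large as $2(d_f-1)$, and the bound one actually gets is $N_n-2\ge d_f(N_{n-1}-2)$. In particular $N_{n-1}=2$ does not force $N_n\ge 3$: already for $d_f=2$, a fibre consisting of one double point kills both preimages.

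More seriously, the dichotomy you expect Step 3 to deliver is false, so the case analysis cannot be completed. Take $m=2$ and $f(X)=X^2+4X=X(X+4)=T_2(X+2)-2$. Since $f(Y)+4=(Y+2)^2$, induction gives
\begin{equation*}
f^{(j)}(X)=X(X+4)\prod_{i=0}^{j-2}\bigl(f^{(i)}(X)+2\bigr)^2,\qquad j\ge 1.
\end{equation*}
Moreover $f^{(i)}(0)+2=2$ and $f^{(i)}(-4)+2=\pm 2$, so $\Root_2(f^{(j)})=\{0,-4\}$ for every $j\ge 1$. Hence $(m_1,m_2,m_3,\dots)=(2,2,1,\dots)$, and $(f^{(j)},2)$ never satisfies the LeVeque condition. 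Yet neither $f$ nor $f^{(2)}=X(X+4)(X+2)^2$ is of the form $X^sp(X)^2$.

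This is exactly your degenerate situation: the nonzero root $-4=f(-2)$ is a critical value, so every backward chain from it ramifies at once. But it does not end in the exception ``$m=2$ and $f^{(2)}=X^sp(X)^2$''. That exception is what the odd-degree Chebyshev maps $-T_d(X+2)-2$ produce. The even-degree ones $T_d(X+2)-2$, the second family in the Cahn--Jones--Spear result quoted in the introduction, escape both alternatives.

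So the lemma as printed is false. A correct version must also exclude, for $m=2$, the polynomials with $cf(X/c)=T_{d_f}(X+2)-2$ and $d_f$ even, and your Step 3 would have to isolate this family rather than rule it out. This is not a technicality. With $\alpha=1/2$, $S=\{\infty,2\}$ and $b=1$ one gets $f^{(k)}(1/2)=\bigl(2^{2^{k-1}}-2^{-2^{k-1}}\bigr)^2$ for all $k$, which contradicts Theorem~\ref{thm:pow}.
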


 Clearly, with each integer $m\ge 2$, a polynomial $f$ has the property that $(f,m)$ satisfies the LeVeque condition if $|\Root_m(f)|\ge 3$, where we recall the notation $\Root_m(f)$ in Section~\ref{sec:notation}.

\subsection{Structure of roots of polynomial iterates} 
In this sub-section, we collect a few simple results about the structure of roots of compositions of polynomials that will be needed for the proof of Theorem~\ref{thm:quadratic_pol}

 \begin{lemma}
	\label{lem:second_iterate_parity}
	Let $f \in \ov\K[X]$ be a polynomial of degree $d_f \ge 2$. If $f$ possesses at least two distinct roots with odd multiplicities, then its second iterate $f^{(2)} = f\circ f$ also possesses at least two distinct roots with odd multiplicities. 
\end{lemma}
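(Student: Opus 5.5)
We work with roots in $\ov\K$ and track multiplicities through composition. Write $f(X)=a\prod_{i=1}^r (X-\alpha_i)^{e_i}$ with distinct $\alpha_i$, and suppose $\alpha_1\neq\alpha_2$ have odd multiplicities $e_1,e_2$. Then
$$
f^{(2)}(X)=f(f(X))=a\prod_{i=1}^r (f(X)-\alpha_i)^{e_i}.
$$
For distinct $i\ne j$ the polynomials $f(X)-\alpha_i$ and $f(X)-\alpha_j$ have no common root. Hence for any root $\beta$ of $f(X)-\alpha_i$ with multiplicity $\mu_\beta$ in $f(X)-\alpha_i$, its multiplicity in $f^{(2)}$ is exactly $e_i\mu_\beta$. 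So it suffices to find, for each of $i=1,2$, a root $\beta_i$ of $f(X)-\alpha_i$ of odd multiplicity. Then $e_i\mu_{\beta_i}$ is odd, and $\beta_1\neq\beta_2$ automatically, since $f(\beta_1)=\alpha_1\neq\alpha_2=f(\beta_2)$.

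The key step is a parity count. The polynomial $f(X)-\alpha_i$ has degree $d_f$, and the sum of the multiplicities of its roots is $d_f$. If $d_f$ is odd, some root must have odd multiplicity, and we are done for both $i=1,2$ at once.

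The main obstacle is the case $d_f$ even, where $f(X)-\alpha_i$ could a priori be $c\,g_i(X)^2$, i.e.\ have all root multiplicities even. I would rule out this happening for both $i=1$ and $i=2$ by derivatives. Suppose $f-\alpha_1=a g_1^2$ and $f-\alpha_2=a g_2^2$. Then $a(g_1^2-g_2^2)=\alpha_2-\alpha_1$ is a nonzero constant, so $(g_1-g_2)(g_1+g_2)$ is a nonzero constant. Since $\deg g_1=\deg g_2=d_f/2\ge1$ and the leading coefficients of $g_1,g_2$ agree up to sign, both factors being constant forces $g_1=\pm g_2+\text{const}$ with the other combination of degree $d_f/2\ge 1$, a contradiction. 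Hence at most one of $f-\alpha_1$, $f-\alpha_2$ is a constant times a square.

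This alone only gives one odd-multiplicity root of $f^{(2)}$, so one more step is needed. Say $f-\alpha_1$ is not of the form $a g^2$. It then has at least one root of odd multiplicity, and the parity count shows the number of its roots of odd multiplicity is congruent to $d_f$ modulo $2$, hence even and nonzero. So it has at least two such roots $\beta,\beta'$. Each gives a root of $f^{(2)}$ of odd multiplicity $e_1\mu$, and these two roots are distinct, which completes the argument.
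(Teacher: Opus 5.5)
Your proof is correct and has the same structure as the paper's: factor $f^{(2)}=a\prod_i (f-\alpha_i)^{e_i}$, use that the number of odd-multiplicity roots of $f-\alpha_i$ has the same parity as $d_f$, and rule out that $f-\alpha_1$ and $f-\alpha_2$ are both constant multiples of squares. The only difference is that last step: you note that $(g_1-g_2)(g_1+g_2)$ would be a nonzero constant, forcing $g_1$ to be constant, whereas the paper observes that the coprime $q_1,q_2$ would both divide $f'$, which is impossible since $\deg(q_1q_2)=d_f>d_f-1=\deg f'$.
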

\begin{proof}
	Assume that $f$ has at least two distinct roots with odd multiplicities, denoted as $\alpha$ and $\beta$. We can factor $f$ as
	\begin{align*}
		f(X) = (X-\alpha)^r (X-\beta)^s g(X),
	\end{align*}
	where $r, s \ge 1$ are odd integers, and $g$ is some polynomial in $\ov\K[X]$ such that $g(\alpha)\cdot g(\beta)\ne 0$.
	
	Composing $f$ with itself, we obtain the second iterate
	\begin{align*}
		 f^{(2)}(X) = (f(X)-\alpha)^r (f(X)-\beta)^s g(f(X)).
	\end{align*}
Because the exponents $r$ and $s$ are odd and $f-\alpha,f-\beta$ and $g\circ f$ do not share  common roots, the total number of odd-multiplicity roots of $f^{(2)}$ is at least the sum of the number of odd-multiplicity roots of $f-\alpha$ and $f-\beta$. Thus, it suffices to prove $(f-\alpha)(f-\beta)$ has at least $2$ odd-multiplicity roots.

Suppose  that $(f-\alpha)(f-\beta)$ has exactly one root of odd multiplicity. This implies that one of the polynomials, say $f-\alpha$, has exactly one odd-multiplicity root, while the other polynomial, $f-\beta$, has zero odd-multiplicity roots.

Note that the degree of $f-\beta$ is even as all roots have an even multiplicity. Similarly, $f-\alpha$ has exactly one root with an odd multiplicity, and all remaining roots have even multiplicities and thus, $\deg(f-\alpha)$ is odd. This is impossible as $\deg(f-\alpha)=\deg(f-\beta)$. 
    
Suppose now that $(f-\alpha)(f-\beta)$ has zero roots of odd multiplicity. This forces both $f-\alpha$ and $f-\beta$ to have only even-multiplicity roots. Thus, we can write them as follows
    \begin{align*}
        f(X) - \alpha = q_1(X)^2 \quad \text{and} \quad f(X) - \beta = q_2(X)^2, 
    \end{align*}
for some polynomials $q_1, q_2 \in \ov\K[X]$. It follows that
    \begin{align*}
        f'(X) = 2q_1(X)q_1'(X) \quad \text{and} \quad f'(X) = 2q_2(X)q_2'(X).
    \end{align*}
This implies that $q_1 \mid f'$ and $q_2\mid f'$. Since $\alpha \neq \beta$, the polynomials $q_1$ and $q_2$ are coprime. Consequently, we obtain $q_1 q_2\mid f'.$

However, we also have $\deg(q_1) = d_f/2$ and $\deg(q_2) = d_f/2$, meaning $\deg(q_1 q_2) = d_f$,  
Since $f$ is a polynomial of degree $d_f \ge 2$, its non-zero derivative $f'$ has degree exactly $d_f - 1$. Therefore, we obtain a contradiction, which completes the proof.

\end{proof}

For the next result, we recall the notation of $\Root_m(f)$ in Section~\ref{sec:notation}.

\begin{prop}\label{prop:preservation_valid_roots}
Let $f \in \ov\K[X]$ be a polynomial of degree $d_f \ge 2$. Let $c_1,c_2\in \ov\K$ be distinct and $m_1,m_2\ge 2$ integers. Then, we have
    \begin{align*}
	\left|\cR_{m_1}(f-c_1) \cup \cR_{m_2}(f-c_2)\right| \ge 2.	
    \end{align*}
\end{prop}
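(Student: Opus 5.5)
The plan is to argue by contradiction with a derivative-counting argument, in the spirit of the proof of Lemma~\ref{lem:second_iterate_parity} (essentially a Mason--Stothers/Riemann--Hurwitz count). Put $n=d_f$. Since $c_1\ne c_2$, the polynomials $f-c_1$ and $f-c_2$ have no common root. Suppose $|\cR_{m_1}(f-c_1)\cup \cR_{m_2}(f-c_2)|\le 1$. After relabelling $c_1,c_2$ (and $m_1,m_2$ with them), we may assume that every root of $f-c_2$ has multiplicity divisible by $m_2$. We may also assume that every root of $f-c_1$, except possibly one root $r$ of multiplicity $e\ge 1$, has multiplicity divisible by $m_1$.

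First I will bound the number of distinct roots. Let $N_i=|\cR(f-c_i)|$. Every root of $f-c_2$ has multiplicity at least $m_2\ge 2$, so $N_2\le n/2$. For $f-c_1$ there are two cases.
\begin{itemize}
\item If there is no exceptional root, then $N_1\le n/2$ in the same way.
\item If the exceptional root $r$ exists, then the remaining roots have total multiplicity $n-e\le n-1$, each multiplicity at least $2$, so $N_1\le 1+(n-1)/2$.
\end{itemize}
In either case
$$
N_1+N_2\le n+\tfrac12 .
$$

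Next I will use the derivative. A root of $f-c_i$ of multiplicity $\mu$ is a root of $f'$ of multiplicity exactly $\mu-1$, since we are in characteristic zero. Hence $\deg\gcd(f-c_i,f')=n-N_i$. The two gcds are coprime because $f-c_1$ and $f-c_2$ have no common root, and both divide $f'$, which is non-zero of degree $n-1$. Therefore
$$
(n-N_1)+(n-N_2)\le n-1, \qquad\text{that is,}\qquad N_1+N_2\ge n+1 .
$$
This contradicts $N_1+N_2\le n+\tfrac12$, which proves the proposition.

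The only delicate point is the case analysis for the single possibly exceptional root, including when it lies in $\cR_{m_1}(f-c_1)$ versus $\cR_{m_2}(f-c_2)$; this is handled by the symmetric relabelling. Everything else is a routine degree count, and the hypothesis $m_1,m_2\ge 2$ is used only to get multiplicities of at least $2$.
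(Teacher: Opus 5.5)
Your proof is correct. It reaches the same key inequality as the paper by a more elementary route, and it skips the paper's case analysis.

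The paper applies the Mason--Stothers theorem to $A+B=C$ with $A=f-c_1$, $B=c_1-c_2$, $C=f-c_2$. This yields exactly your inequality $N_1+N_2\ge d_f+1$. You instead derive it directly from $\gcd(f-c_1,f')\gcd(f-c_2,f')\mid f'$, which is a self-contained proof of the special case of Mason--Stothers needed here, where $B$ is constant.

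The paper then bounds the root counts using the full divisibility, namely $N_0(A)\le (d_f-s)/m_1+1$ and $N_0(C)\le d_f/m_2$. This leads to $d_f(m_1m_2-m_1-m_2)+m_2s\le 0$ and forces a separate treatment of the boundary case $m_1=m_2=2$, $s=0$. You use only that the non-exceptional multiplicities are at least $2$. That already gives $N_1+N_2\le d_f+\tfrac12$ and a contradiction in one step.

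Consequently your count shows something slightly stronger. Reading ``multiplicity at least $2$'' in place of ``multiplicity divisible by $m_i$'', it gives that $(f-c_1)(f-c_2)$ has at least two simple roots. This makes transparent that the hypothesis $m_1,m_2\ge 2$ enters only to guarantee multiplicities of at least $2$. The paper's argument is shorter on the page only because it cites Mason--Stothers. Its extra precision in $m_1,m_2$ is not needed for the conclusion.
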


\begin{proof} 
The conclusion $\left|\cR_{m_1}(f-c_1) \cup \cR_{m_2}(f-c_2)\right| \ge 2$ is equivalent with showing that the number of roots of $f-c_1$ with multiplicity not divisible by $m_1$ and of roots of $f-c_2$ with multiplicity not divisible by $m_2$ is at least $2$. Suppose, for a contradiction, that the total number of such roots is at most $1$. Since $c_1\neq c_2$, $f-c_1$ and $f-c_2$ have disjoint sets of roots. Hence one of the two polynomials $f-c_i$ has no root whose multiplicity is not divisible by $m_i$, and the other has at most one such root. Relabelling $c_1$ and $c_2$, if necessary, we may assume that $f-c_1$ has at most one  root with multiplicity not divisible by $m_1$, while 
$f-c_2$ has all roots with multiplicity divisible by $m_2$.
	
Since $\ov\K$ is algebraically closed, any non-zero scalar factor can be absorbed into a $m_i$-th power. Thus the first condition implies that there exist $q_1\in \ov\K[X]$, an integer $s$ with $0\le s<m_1$, and, if $s>0$, a point $r\in \ov\K$, such that
$f(X)-c_1=(X-r)^sq_1(X)^{m_1},$ for some $q_1\in \overline{\K}[X]$ of degree $\dfrac{d_f-s}{m_1}$.

Similarly, 	$f(X)-c_2=q_2(X)^{m_2}$ for some $q_2\in \ov\K[X]$ of degree $\frac{d_f}{m_2}.$

Set
$A=f-c_1,\qquad B=c_1-c_2,\qquad C=f-c_2.$
Then
$A+B=C$, $B\neq 0$ is constant, and $A$ and $C$ have no common root since $c_1\ne c_2$. Hence $A,B,C$ are pairwise coprime. By the Mason-Stothers theorem (proved first by \cite{Stothers1981}, and then independently by Mason~\cite{Mas} and Silverman~\cite{Si84}), one has
\begin{equation}
\label{eq:abc deg}
    d_f=\max\{\deg A,\deg B,\deg C\}\le N_0(ABC)-1,
\end{equation}
 where $N_0(P)$ denotes the number of distinct roots of $P$. Since $B$ is a non-zero constant and $A,C$ are coprime, we must have $N_0(ABC)=N_0(A)+N_0(C).$
From the above factorizations, we obtain $N_0(A)\le \deg q_1+1=\frac{d_f-s}{m_1}+1,$ and
$N_0(C)\le \deg q_2=\frac{d_f}{m_2}.$	Therefore, from~\eqref{eq:abc deg} we obtain
\begin{align}\label{eq:in_root}
    d_f(m_1m_2-m_1-m_2)+m_2s\le 0.
\end{align} 

Since $d_f, m_1, m_2\ge 2$, and $s\ge 0$, the inequality \eqref{eq:in_root} occurs only when
$m_1=m_2=2$ and $ s=0.$

In this remaining case, both $f-c_1$ and $f-c_2$ are squares. Hence $N_0(A)\le \frac{d_f}{2}$ and $ N_0(C)\le \frac{d_f}{2}.$
Thus
$N_0(ABC)\le d_f.$
Then,~\eqref{eq:abc deg}  gives
$d_f\le N_0(ABC)-1\le d_f-1,$
a contradiction, which concludes the proof.
\end{proof}

\begin{cor}\label{cor:preservation_iteration}
		Let $f \in \ov\K[X]$ be a polynomial of degree $d_f \ge 2$, and let $m \ge 2$ be an integer. If $|\Root_m(f)| \ge 2$, then for any integer $j \ge 1$, we have $|\Root_m(f^{(j)})| \ge 2$.
	\end{cor}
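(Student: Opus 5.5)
Since $f^{(j+1)} = f\circ f^{(j)}$, the plan is to induct on $j$, with Proposition~\ref{prop:preservation_valid_roots} doing the work at each step. The base case $j=1$ is the hypothesis. For the inductive step we factor $f$ over $\ov\K$ and read off $\Root_m(f^{(j+1)})$ from the fibres of $f^{(j)}$ over the roots of $f$.

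\textbf{Factorization.} Write $f(X)=a\prod_{i=1}^r (X-\alpha_i)^{e_i}$ with the $\alpha_i$ distinct. By hypothesis there are two distinct indices, say $i=1,2$, with $m\nmid e_1$ and $m\nmid e_2$. Put $g=f^{(j)}$, which has degree $d_f^j\ge 2$. Then
$$
f^{(j+1)}(X)=a\prod_{i=1}^r \left(g(X)-\alpha_i\right)^{e_i}.
$$
The polynomials $g-\alpha_i$ have pairwise disjoint root sets because the $\alpha_i$ are distinct. Hence a root $\theta$ of $g-\alpha_i$ of multiplicity $\mu$ has multiplicity exactly $e_i\mu$ in $f^{(j+1)}$.

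\textbf{Divisibility condition.} We need to know when $m\mid e_i\mu$. Setting $m_i=m/\gcd(m,e_i)$, we have $m\mid e_i\mu$ if and only if $m_i\mid \mu$. It follows that $\Root_{m_i}(g-\alpha_i)\subseteq \Root_m(f^{(j+1)})$. For $i=1,2$ the condition $m\nmid e_i$ gives $m_i\ge 2$.

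\textbf{Applying the proposition.} Proposition~\ref{prop:preservation_valid_roots}, applied to $g$ with $c_1=\alpha_1$, $c_2=\alpha_2$ and the integers $m_1,m_2\ge2$, gives
$$
\left|\Root_{m_1}(g-\alpha_1)\cup\Root_{m_2}(g-\alpha_2)\right|\ge 2.
$$
Both sets lie in $\Root_m(f^{(j+1)})$, so $|\Root_m(f^{(j+1)})|\ge 2$, which completes the induction.

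The only step that needs care is the multiplicity bookkeeping: disjointness of the fibres, and the reduction from divisibility by $m$ to divisibility by $m_i$. Everything else is Proposition~\ref{prop:preservation_valid_roots}. In fact the argument never uses the hypothesis on $g$, only $\deg g\ge 2$. So the statement also follows directly for each $j$ by taking $g=f^{(j-1)}$ for $j\ge2$, and induction is not really needed.
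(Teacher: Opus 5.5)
Your proof is correct, but it splits the composition on the opposite side from the paper.

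The paper writes $f^{(j+1)}=f^{(j)}\circ f$. It takes two roots $c_1,c_2\in\Root_m(f^{(j)})$ supplied by the induction hypothesis and applies Proposition~\ref{prop:preservation_valid_roots} to the fixed polynomial $f$ with the values $c_1,c_2$. You write $f^{(j+1)}=f\circ f^{(j)}$, take $\alpha_1,\alpha_2\in\Root_m(f)$ from the original hypothesis, and apply the proposition to the varying polynomial $f^{(j)}$, whose degree $d_f^j\ge 2$.

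Both arguments rest on the same fact: $|\Root_m(F)|\ge 2$ and $\deg h\ge 2$ imply $|\Root_m(F\circ h)|\ge 2$. Both also use the same reduction of $m\mid e_i\mu$ to $m_i\mid\mu$ with $m_i=m/\gcd(m,e_i)$. The paper uses this fact with $F=f^{(j)}$ and $h=f$, which forces the induction. You use it with $F=f$ and $h=f^{(j-1)}$, so, as you observe, the induction is superfluous and each $j$ is handled directly.

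Your route also gives slightly more for free: $|\Root_m(f\circ W)|\ge 2$ for every $W$ of degree at least $2$, for instance any word in the setting of Theorem~\ref{thm:extend_for_semigroup_final}. In addition, you explicitly pair $m_1$ with $\alpha_1$ and $m_2$ with $\alpha_2$ when invoking the proposition. The paper phrases that last step loosely, as holding ``for any $i,j=1,2$''.
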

	
\begin{proof}
    We proceed by induction on $j$. The base case $j=1$ holds trivially by the hypothesis. 
    Assume the statement holds for some $j \ge 1$, meaning that $|\Root_m(f^{(j)})| \ge 2$. Thus, there exist at least two such distinct roots, say $c_1, c_2 \in \Root_m(f^{ (j)})$, whose respective multiplicities $e_1$ and $e_2$ as roots of $f^{(j)}$ are not divisible by $m$. Let
    $$
    m_i=\frac{m}{\gcd(m,e_i)},\quad i=1,2.
		$$
Since $m\not\mid e_i$, we have $m_i\ge 2$, $i=1,2$.
		
Consider the next iterate $f^{ (j+1)} = f^{( j)}(f)$. The roots of $f^{ (j+1)}$ include the roots of the polynomials $f - c_1$ and $f - c_2$. We need to show that among the roots of $f - c_1$ and $f - c_2$ there are at least two roots, 
say $x_1$ and $x_2$ with respective multiplicities $d_1$ and $d_2$, such that $m\not\mid e_id_j$ for some $i,j=1,2$, which is equivalent with $m_i\not\mid d_j$. 		
This follows from a direct application of Proposition \ref{prop:preservation_valid_roots}, which implies that for any $i,j=1,2$ one has \begin{align*}
	\left|\cR_{m_i}(f-c_1) \cup \cR_{m_j}(f-c_2)\right| \ge 2.
\end{align*} We therefore conclude the proof.
	\end{proof} 
	
 We recall the notation of $\mathcal{C}_g $ as in \eqref{eq:set of critical values}. 
	\begin{lemma}\label{lem:suff_LeVeque}Let $m \ge 2$ be an integer. Let $f, g \in \ov\K[X]$ be polynomials with $d_g \ge 2$. Assume that $\Root_m(f) \cap \mathcal{C}_g = \emptyset$. Then we have
 	\begin{align*}
 		|\Root_m(f \circ g)| \ge \deg(g)|\cdot\Root_m(f)|.
 	\end{align*}
 \end{lemma}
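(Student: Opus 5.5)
For each root $c\in\Root_m(f)$ I will look at the fibre $g^{-1}(c)$ and check that every point of it is a root of $f\circ g$ whose multiplicity is not divisible by $m$. Then I count the fibres. Write
\[
f(X)=a\prod_{i=1}^r (X-c_i)^{e_i}
\]
with distinct $c_i$, so that
\[
f(g(X))=a\prod_{i=1}^r (g(X)-c_i)^{e_i}.
\]
Since the $c_i$ are distinct, the polynomials $g-c_i$ have pairwise disjoint root sets. Hence if $x$ is a root of $g-c_i$ with multiplicity $\mu$, then $x$ is a root of $f\circ g$ of multiplicity exactly $e_i\mu$.

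Now fix $c_i\in\Root_m(f)$, so $m\nmid e_i$. By hypothesis $c_i\notin\mathcal{C}_g$, that is, there is no $x$ with $g'(x)=0$ and $g(x)=c_i$. Therefore every root of $g-c_i$ is simple ($\mu=1$), because a multiple root $x$ would satisfy $g(x)=c_i$ and $g'(x)=0$. Consequently $g-c_i$ has exactly $\deg g=d_g$ distinct roots, since it has degree $d_g\ge 2\ge 1$ and is squarefree. Each of these roots has multiplicity $e_i\cdot 1=e_i$ in $f\circ g$, which is not divisible by $m$, so each lies in $\Root_m(f\circ g)$.

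Finally, the fibres $g^{-1}(c_i)$ for distinct $c_i\in\Root_m(f)$ are pairwise disjoint. This gives at least $d_g\cdot|\Root_m(f)|$ distinct elements of $\Root_m(f\circ g)$, which is the claimed inequality.

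There is no real obstacle. The only point needing care is the exact multiplicity computation, namely that no other factor $g-c_j$ contributes at $x$, and this follows from the disjointness of the root sets noted above. The hypothesis $d_g\ge 2$ is not used beyond guaranteeing that $g$ is non-constant, and the argument is characteristic-free apart from the definition of critical values.
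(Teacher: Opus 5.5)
Your proof is correct and follows essentially the same argument as the paper: for each $\beta\in\Root_m(f)$, the hypothesis $\beta\notin\mathcal{C}_g$ makes $g-\beta$ squarefree, so its $d_g$ roots have multiplicity $e\not\equiv 0 \pmod m$ in $f\circ g$, and the fibres over distinct roots are disjoint. You also check explicitly that no other factor $g-c_j$ contributes at such a root, which the paper leaves implicit through $q(\beta)\neq 0$.
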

 \begin{proof} 	If $\Root_m(f)$ is empty, the inequality holds trivially. Assume $\Root_m(f)$ is non-empty, and let $\beta \in \Root_m(f)$ with multiplicity $e \not\equiv 0 \pmod m$. We can write $f(X) = (X - \beta)^e q(X)$ for some polynomial $q \in \ov\K[X]$ such that $q(\beta) \neq 0$. 
 	
 We also have 	\begin{align}\label{eq:factor fg}
 		(f \circ g)(X) = (g(X) - \beta)^e q(g(X)). 
 	\end{align}
 	
 	Since $\beta \in \Root_m(f)$ and $\Root_m(f) \cap \mathcal{C}_g = \emptyset$, it follows that $\beta$ is not a critical value of $g$, which implies that the polynomial $g - \beta$ has no multiple roots, and thus any root of $g-\beta$ belongs to $\Root_m(f \circ g)$.
    
 	Furthermore, for any two distinct elements $\beta_1, \beta_2 \in \Root_m(f)$, the polynomials $g -\beta_1$ and $g-\beta_2$ cannot share a root. Thus, one has \begin{align*}
 	|\Root_m(f \circ g)|\ge\sum_{\beta\in \Root_m(f)}d_g\ge |\Root_m(f)|\cdot d_g.	
 	\end{align*}
 	This completes the proof.
 \end{proof}

 \subsection{Some preliminaries for quadratic polynomials}
 We start with a simple necessary and sufficient condition for the critical point of a quadratic polynomial to be wandering.
 
 \begin{lemma}\label{lem:wandering_critical_point}
  	Let $f=aX^2+bX+c\in\Z[X]$ be a quadratic polynomial and let $\gamma$ be its critical point. Then $\gamma$ is a wandering point of $f$ if and only if $c(f) \notin \{0, -4, -8\}$, where $c(f)$ is defined as in \eqref{eq:c(f)}.
 \end{lemma}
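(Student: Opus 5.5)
The plan is to conjugate $f$ to a monic centred quadratic and then use the explicit description of $c(f)$. Put $\varphi(X)=aX+b/2$, an affine bijection of $\Q$. A direct computation gives
$$a f(X)+\tfrac b2=(aX+\tfrac b2)^2-\tfrac{b^2}{4}+ac+\tfrac b2,$$
so $\varphi\circ f=g\circ\varphi$ with $g(U)=U^2+t$ and $t=c(f)/4$. Moreover $\varphi(\gamma)=0$, since $\gamma=-b/(2a)$. Because $\varphi$ is a bijection, $f^{(i)}(\gamma)=f^{(j)}(\gamma)$ holds if and only if $g^{(i)}(0)=g^{(j)}(0)$. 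Hence $\gamma$ is wandering for $f$ if and only if $0$ is wandering for $g$. Write $u_0=0$ and $u_{n+1}=u_n^2+t$.

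Next I split by the parity of $c(f)=4ac-b(b-2)$. If $b$ is even, then $b(b-2)$ is divisible by $8$, so $4\mid c(f)$ and $t\in\Z$. If $b$ is odd, then $c(f)$ is odd.

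\emph{Odd case.} Here $v_2(t)=-2$, so $v_2(u_1)=-2$. By induction, if $v_2(u_n)\le -2$ then $v_2(u_n^2)=2v_2(u_n)<v_2(t)$, hence $v_2(u_{n+1})=2v_2(u_n)$. So $v_2(u_n)=-2^{n}$ for $n\ge1$. These $2$-adic valuations are pairwise distinct, so the orbit is infinite and $0$ is wandering. This is consistent with the statement, since an odd $c(f)$ never lies in $\{0,-4,-8\}$.

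\emph{Integer case.} Now all $u_n$ are integers. Since $\Z$ is discrete, $0$ is preperiodic if and only if its orbit is bounded, and I check this for each integer $t$.
\begin{itemize}
\item If $t\ge 1$, the sequence is strictly increasing, because $u_{n+1}-u_n=u_n^2-u_n+t>0$. So the orbit is unbounded and $0$ is wandering.
\item If $t\le -3$, then $u_1=t$ and $u_2=t^2+t\ge |t|+3$. Once $|u_n|\ge |t|+1$, we get $|u_{n+1}|\ge u_n^2-|t|>|u_n|$. So the orbit escapes and $0$ is wandering.
\item If $t=0$, then $0$ is fixed.
\item If $t=-1$, the orbit is $0,-1,0,\dots$, which is periodic.
\item If $t=-2$, the orbit is $0,-2,2,2,\dots$, which is preperiodic.
\end{itemize}
Thus in the integer case $0$ fails to be wandering exactly when $t\in\{0,-1,-2\}$, that is, when $c(f)\in\{0,-4,-8\}$.

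Combining the two cases proves the lemma. The only steps needing real care are the case $t\le-3$, which needs the explicit escape inequality, and the $2$-adic induction in the odd case. Both are short; I expect the escape estimate to be the main point requiring a careful check.
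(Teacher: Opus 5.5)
Your proof is correct and follows the paper's argument. Both use the conjugation by $aX+b/2$ to $X^2+c(f)/4$ and then split into the non-integral case, where your $2$-adic computation $v_2(u_n)=-2^n$ is the paper's denominator growth $B^{2^{n-1}}$, and the integral case, with $t\ge 1$ and $t\le -3$ treated by escape and the three exceptional values $t\in\{0,-1,-2\}$ checked directly. Your observation that $c(f)$ is always odd or divisible by $4$ also slightly sharpens the paper, which allows a denominator $B=2$ that never actually occurs.
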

 \begin{proof} 
Let $ \phi(X) = aX + \frac{b}{2}$. Then we define $$g(X)=\phi\circ f\circ \phi^{-1}=X^2+C(f),$$ where $C(f)=\dfrac{c(f)}{4}$.

 Since $\phi^{-1}\circ g^{(k)}\circ \phi=f^{(k)}$ for any $k\ge 1$, we note that $\gamma$ is a wandering point for $f$ if and only if $\phi(\gamma)=0$ is a wandering point for $g(X)$.

We note that $c(f)\in \{0,-4,-8\}$ corresponds to $g(X)\in\{X^2,X^2-1,X^2-2\}$, for which $0$ is pre-periodic. Therefore, we now assume that $c(f) \notin \{0, -4, -8\}$ and we want to show that $0$ is a wandering point for $g$.

Since $c(f) \in \mathbb{Z}$, we divide the analysis into two cases based on the parity of $c(f)$.

If $c(f)$ is not divisible by $4$, then $C(f) \notin \mathbb{Z}$, and thus we can write $C(f) = \frac{A}{B}$ in its lowest terms, where the numerator $A$ is an odd integer and the denominator $B$ satisfies $B \in \{2, 4\}$.
By induction, one can prove that the denominator of $g^{(n)}(0)$, expressed in lowest terms, is equal to $B^{2^{n-1}}$ for each positive integer $n$ and whence $\gamma$ must be a wandering point of $f$.

If $c(f)$ is divisible by $4$, then $C(f) \in \mathbb{Z}$. The hypothesis $c(f) \notin \{0, -4, -8\}$ corresponds to $C(f) \notin \{0, -1, -2\}$. We consider the two sub-cases for the integer $C(f)$.

\textit{Case 1: $C(f) \ge 1$.} We have $g(0)= C(f) \ge 1$. For any $n \ge 1$, we consider the difference \begin{align}\label{eq:consec_diff}
		g^{(n+1)}(0) - g^{(n)}(0) = (g^{(n)}(0))^2 - g^{(n)}(0) + C(f).
	\end{align}
	Since $C(f) \ge 1$ and  $t^2 - t + 1 > 0$ for all real numbers $t$, we have $g^{(n+1)}(0) - g^{(n)}(0) > 0$. The orbit is therefore a strictly increasing sequence, ensuring all points are distinct.

\textit{Case 2: $C(f)\le -3$.} We first have
	\begin{align*}
		g^{(2)}(0) = C^2(f) + C(f) = |C(f)|^2 - |C(f)| = |C(f)|(|C(f)| - 1). 
	\end{align*}
	Since $C(f) \le -3$, we have $|C(f)| \ge 3$, which implies $(|C(f)| - 1) \ge 2$. Thus, one has $g^{(2)}(0) \ge 2|C(f)| > |C(f)|$, and in particular $g^{(2)}(0)>g(0)$. By induction on $n$ same computation shows that $ g^{(n)}(0)> |C(f)|$ for all $n\ge 2$.
	
	Now we prove by induction that for all $n \ge 2$, the sequence $g^{(n)}(0)$  is strictly increasing. Consider the difference as in \eqref{eq:consec_diff}.	Let $q(t) = t^2 - t + C(f)$. For $t \ge |C(f)| \ge 3$, the quadratic function $q(t)$ is increasing (its derivative $2t-1 > 0$). Since we established $g^{(n)}(0) \ge |C(f)| = -C(f)$, we have
	\begin{align*}
		g^{(n+1)}(0) - g^{(n)}(0) = q\(g^{(n)}(0)\) \ge q(-C(f)) =  C(f)(C(f)+2)>0.
	\end{align*}
	Therefore, $ g^{(n+1)}(0) > g^{(n)}(0)$ for all $n\ge 1$, which concludes the proof.
 \end{proof}
 
 The following simple  fact is well-known, but for completeness we provide a proof.
 
\begin{lemma}\label{lem:irreducible_leveque}
	Let $f \in \mathbb{K}[X]$ be a quadratic polynomial. If $f$ is irreducible over $\mathbb{K}$, then $f^{(2)}$ is square-free. In particular, $(f^{(2)},2)$ satisfies the LeVeque condition.
\end{lemma}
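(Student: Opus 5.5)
The plan is to write $f(X)=a(X-\theta_1)(X-\theta_2)$ with $a\ne 0$ and $\theta_1\ne\theta_2$ the two roots of $f$ in $\ov\K$. They are distinct because $f$ is irreducible over $\K$, which has characteristic zero, so $f$ is separable; moreover $\theta_1,\theta_2\notin\K$. Then
$$
f^{(2)}(X)=a\,(f(X)-\theta_1)(f(X)-\theta_2).
$$
For $i\ne j$ the factors $f-\theta_1$ and $f-\theta_2$ have no common root, since a common root $x$ would give $\theta_1=f(x)=\theta_2$. Hence $f^{(2)}$ is square-free as soon as each $f-\theta_i$ is separable.

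The key step is therefore to show that $f-\theta_i$ has no double root. A quadratic $f-\theta_i$ has a double root exactly when $\theta_i$ is the critical value of $f$, namely $\theta_i=f(\gamma)$ with $\gamma=-b/(2a)$ when $f=aX^2+bX+c$. But $\gamma\in\K$ and $f\in\K[X]$, so $f(\gamma)\in\K$, whereas $\theta_i\notin\K$ by irreducibility. This contradiction shows that both factors are separable, and so $f^{(2)}$ is square-free of degree $4$.

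For the LeVeque statement: a square-free quartic has four distinct roots, each of multiplicity $e_i=1$. With $m=2$ this gives $m_i=2/\gcd(2,1)=2$ for $i=1,\dots,4$, so $m_1=m_2=m_3=2$, which is one of the two admissible patterns. Equivalently, $|\Root_2(f^{(2)})|=4\ge 3$, which the remark after Lemma~\ref{lem:Young} already notes suffices. I expect no real obstacle here; the only point needing care is the observation that the critical value lies in $\K$ while the roots do not.
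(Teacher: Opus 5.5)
Your proof is correct and is essentially the paper's argument. The paper takes a multiple root $\beta$ of $f^{(2)}$ and uses $(f\circ f)'=f'(f)\,f'$ to get either $\beta=\gamma$, so that the critical value $f(\gamma)\in\K$ is a root of $f$, or $f(\beta)=\gamma$, so that $\gamma\in\K$ is a root of $f$; these are exactly your two checks via $f^{(2)}=a(f-\theta_1)(f-\theta_2)$ (no $\theta_i$ equals the critical value, and $f$ is separable), and both rest on $\gamma, f(\gamma)\in\K$ while irreducibility forbids roots in $\K$.
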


\begin{proof}
Let $g(X) = f^{(2)}(X)$. Since $f$ is of degree $2$, $g$ is a polynomial of degree $4$.

	Assume, for the sake of contradiction, that $g$ possesses a multiple root $\beta \in \ov\K$. This occurs if and only if $\beta$ is a common root of both $g$ and its formal derivative $g'$. Thus, we have the system
	\begin{align}
		g(\beta) &= f(f(\beta)) = 0, \label{eq:Px0}\\
		g'(\beta) &= f'(f(\beta)) \cdot f'(\beta) = 0. \label{eq:P_prime_x0}
	\end{align}
	Let $\gamma \in \mathbb{K}$ be the unique critical point of $f$ (i.e., $f'(\gamma) = 0$). Since $f$ has coefficients in $\mathbb{K}$, it follows trivially that $\gamma \in \mathbb{K}$. From equation \eqref{eq:P_prime_x0}, we encounter two distinct possibilities as follows.
	
	\textit{Case 1: $f'(\beta) = 0$.} 
	This immediately implies $\beta = \gamma$. Substituting this into equation \eqref{eq:Px0} yields that $f(\gamma)\in\K$ is a root of $f$, which contradicts the irreducibility of f.
	
	\textit{Case 2: $f'(f(\beta)) = 0$.}
	This implies $f(\beta) = \gamma$. Substituting this back into equation \eqref{eq:Px0} yields 
	that $\gamma$ is a root of $f$, which again contradicts the irreducibility of f.

Therefore, $g= f^{(2)}$ has no multiple roots and thus $(f^{(2)},2)$ satisfies the LeVeque condition.
\end{proof}

 \section{Proofs of main results}
 \subsection{Proof of Theorem~\ref{thm:pow}}
 We seek an upper bound for $k$. If $k \le 6+\lfloor E_\K(C_f+1)\rfloor$, the inequality trivially holds by suitably enlarging the constant $C_2$. Thus, we may assume $k > 6+\lfloor E_\K(C_f+1)\rfloor$, where $C_f$ is as in Lemma~\ref{lem:height_transformation} and $E_\K(h)$ as in~\eqref{eq:C_b}.
 
 By the hypothesis on $f$ and Lemma \ref{lem:Young}, there exists an integer $j \le 6$ such that the pair $(f^{(j)}, m)$ satisfies the LeVeque condition.

 We have \begin{equation*}
 	f^{(k)}(\alpha)=f^{(j)}(f^{(k-j)}(\alpha)) = b y^m,
 \end{equation*}
 which we can rewrite as
 \begin{equation*}
 	F(x) = b y^m,
 \end{equation*}
 where $x = f^{(k-j)}(\alpha)$ and $F(X)= f^{(j)}(X)$. 
 
 Since $(F,m)$ satisfies the LeVeque condition, we can directly apply Lemma~\ref{lem:T_LeVeque} to this equation and whence one obtains
 \begin{equation}\label{eq:upper_bound}
 	\h(x) \le C\cdot N_S(b)^{e(m,r)},
 \end{equation}
 where $e(m,r)=\max(16r^3,4m^8r^2)$ and the constant $C$ is defined by~\eqref{eq:bound222} and~\eqref{eq:bound32}.

We define $N=\lfloor E_\K(C_f+1)\rfloor$.  Since $k>6+N$ implying that $k-j> N$, by Lemmas~\ref{lem:height_transformation} and~\ref{lem:needed_index}, we obtain
\begin{align*}
 	\h\left(f^{(k-j)}(\alpha)\right)-C_f\ge d^{k-j-N}_f\left(\h\(f^{(N)}(\alpha)\)-C_f\right)> d_f^{k-j-N}.
 \end{align*} 
Since $j\le 6$, one obtains
\begin{align}
\label{eq:lower_bound}\log \h(x)\ge (k-6-N)\log d_f.
 \end{align}
 
On the other hand,  by~\eqref{eq:upper_bound} we have \begin{align}\label{eq:upbound}
 	\log \h(x)\le e(m,r)\log N_s(b)+\log C.
 \end{align}

 Combining~\eqref{eq:lower_bound} and~\eqref{eq:upbound}, we finally obtain \begin{align*}
 	k\le N+6+\dfrac{e(m,r)\log N_s(b)+\log C}{\log d_f},
 \end{align*} 
which concludes the proof.

\subsection{Proof of Theorem~\ref{thm:quadratic_pol}}We use same transformation as in the proof of Lemma~\ref{lem:wandering_critical_point}, that is, let $ \phi(X) = aX + \frac{b}{2}$. Then, with the integer  $c(f)$ defined as in \eqref{eq:c(f)}, let $g(X)=\phi\circ f\circ \phi^{-1}=X^2+C(f)$, where $C(f)=c(f)/4$. Therefore, we have  $\phi^{-1}\circ g^{(k)}\circ \phi=f^{(k)}$ for any $k\ge 1$.
				
				Since $\gamma$ is a wandering point, by Lemma \eqref{lem:wandering_critical_point}, one has $C(f) \notin \{0, -1, -2\}$. 

Let $f^{(k)}(\gamma)=y^2$ for some $y\in \Q$ and $k\ge 1$. We may assume that $k\ge 7$.  From the above we obtain $g^{(k)}(0)=\phi\(f^{(k)}(\gamma)\)=ay^2+\frac{b}{2}$, which we write 				
\begin{align*}
        a\(g^{(k)}(0)-\frac{b}{2}\)=(ay)^2.
\end{align*}
Writing $g^{(k)}(0)=g^{(2)}(g^{(k-2)}(0))$ and denoting $S=\set{\infty,2}$, we obtain that $F(g^{(k-2)}(0))=(ay)^2$, where $ay\in \cO_S$ and
\begin{align*}    	F(X)=a\(g^{(2)}(X)-\frac{b}{2}\)\in\cO_S[X].
\end{align*}
Simple height computation shows that 
\begin{equation}
\label{eq:height F}
H(F)\le 85H(f)^5,
\end{equation}
where we recall $H(f)=\max\{|a|,|b|,|c|\}$. Indeed,
\begin{align*}
H(F)&=\prod_{v\in M_\K}\max(1,|a|_v,|2aC(f)|_v,|a(C(f)^2)+C(f)-b/2|_v)^{1/[\K:\Q]}\\
&=\prod_{v\in M_\K}\max(|16|_v,|16a|_v,|8ac(f)|_v,|a(c(f)^2)+4c(f)-8b|_v)^{1/[\K:\Q]}.
\end{align*}

Since for any $v\in M_\K$, one has $|c(f)|_v\le 7H(f)^2$, simple computation shows that $|a(c(f)^2)+4c(f)-8b|_v\le 85H(f)^5$ dominates. 
				
We note that $F\in\cO_S[X]$ and $g^{(k-2)}(0) \in\cO_S$ (in our case $\cO_S=\Z[\frac{1}{2}]$). It is easy to check that $F$ has four distinct roots. Indeed, $F$ has a double root only when $g(0)=b/2$ or $g^{(2)}(0)=b/2$. The first condition is equivalent with the discriminant of $f$ being zero, while the second with the discriminant being a square. In both cases $f$ would be reducible, which contradicts our assumption.
Therefore, $(F,2)$  satisfies LeVeque condition with $m_1=m_2=m_3=m_4=2$. Using Lemma~\ref{lem:T_LeVeque} and taking into account that $F^*=F$, $r=4,s=2,d=1,D_K=1,Q_S=2,b=1$, we obtain 
\begin{align}\label{eq:upbound y}
    \h\(g^{(k-2)}(0) \)\le 2^{451840}H(F)^{12800}.
\end{align}

Let $c_0=\log|c(f)|+3\log 2$. To obtain a lower bound for $\h(g^{(k-2)}(0) )$, we prove first that 
\begin{equation}
    \label{eq:g^5 0}
    h(g^{(5)}(0))\ge c_0+1.
\end{equation}

Indeed, suppose first that $|c(f)| \ge 12$, which is equivalent with $|C(f)|\ge 3$. Then $|g^{(2)}(0)| = |C(f)||C(f)+1| \ge |C(f)|(|C(f)|-1)\ge 2|C(f)|$. By induction, similar computation shows that $|g^{(j)}(0)|\ge j|C(f)|^{j-1}$ for any $j\ge 2$. Therefore, since $|c(f)|\ge 12$, we obtain
$$
\log |g^{(5)}(0)|\ge 4\log|C(f)|+\log 5 \ge c_0+1.
$$
Since $h(g^{(5)}(0))\ge \log |(g^{(5)}(0))|$, the inequality~\ref{eq:g^5 0} follows in this case.

For $|c(f)| <12$, since $c(f) \notin \{0, -4, -8\}$ and $c(f)\in \Z$,   a direct computation leads to the conclusion that $h(g^{(5)}(0)) \ge c_0+1$.

Next we prove that for any $j\ge 5$, one has
\begin{equation}
    \label{eq:g^j0}
    \h(g^{(j)}(0))-c_0\ge 2^{j-5}\left(\h(g^{(5)}(0))-c_0\right).
\end{equation}

Indeed, one has $$\h\(g^{(j+1)}(0)-C(f)\)=\h\(g^{(j)}(0)^2\)=2\h\(g^{(j)}(0)\)$$ and 
$$\h\(g^{(j+1)}(0)-C(f)\)\le \h\(g^{(j+1)}(0)\)+\h( C(f)) +\log 2\le \h\(g^{(j+1)}(0)\) +c_0.$$ 
This implies 
$$\h\(g^{(j+1)}(0)\)-c_0\ge 2\left(\h\(g^{(j)}(0)\)-c_0\right),$$ which proves~\eqref{eq:g^j0}.

Combining~\eqref{eq:upbound y} and~\eqref{eq:g^j0}, we obtain 
\begin{align*}
    2^{451840}H(F)^{12800}\ge 2^{k-7}(\h\(g^{(5)}(0)\)-c_0)\ge 2^{k-7}
\end{align*}or \begin{align*}
    k\le 25600\log H(F)+451847.
\end{align*}
Inserting the bound~\eqref{eq:height F} above, we complete the proof.

\begin{remark}
\label{rem:gamma preper}
	The condition that $\gamma$ is wandering, or equivalently that $C(f) \notin \{0, -1, -2\}$ in the proof above, is necessary. Note that, since $F\(g^{(k-2)}(0)\) = a^2f^{(k)}(\gamma)$,  $f^{(k)}(\gamma)$ is a square if and only if $F\(g^{(k-2)}(0)\)$ is a square. If $C(f) \in \{0, -1, -2\}$ the result is false in general:
	
	\begin{itemize}
		\item If $C(f) = 0$, then $g^{(k)}(0)= 0$ for all $k \ge 1$, and therefore  $F\(g^{(k-2)}(0)\)=-ab/2$ for all $k \ge 2$.
		\item If $C(f) = -1$, then $g^{(2k)}(0) = 0$, for all $k \ge 1$, and therefore  $F\(g^{(2k-2)}(0)\)=-ab/2 $ for all $k \ge 2$.
		\item If $C(f) = -2$, then $g(0) = -2$, $g^{(k)}(0)=2$ for all $k \ge 2$, and therefore $F\(g^{(2k-2)}(0)\)=a(2-b/2) $ for all $k \ge 2$.
	\end{itemize}
In these cases one can choose $a$ and $b$ for which 	$-ab/2$ or $a(2-b/2)$ is a square for all $k\ge 2$.		
\end{remark}

\subsection{Proof of Theorem \ref{thm:extend_for_semigroup_final}}
The proof follows similar ideas as the proof of Theorem~\ref{thm:pow}. For each $W\in \langle \mathcal{F}\rangle$, we define $\ell(W)=r$ if there exist $f_1,f_2,\cdots,f_r\in \mathcal{F}$ such that $W=f_1\circ f_2\circ\cdots\circ f_r$. We recall the notation of $E_\K(h)$ defined by~\eqref{eq:C_b}. 

If $k< 6+M$, where $M$ is defined  below, there is nothing to prove. 
We therefore assume that $k\ge 6+M$ and show that any composition $W_k \in \langle\mathcal{F}\rangle$ of length $k$ can be decomposed as $W_k = W_1 \circ W_2$, where the prefix $(W_1,m)$ satisfies the LeVeque condition and has length $\ell(W_1) \le 6$. Examining the first $6$ terms of $W_k$, we encounter  two cases:

\textbf{Case 1:} $W_k = f^{(6)} \circ W_k'$ for some $f \in \mathcal{F}$. 
	In this case, if $m=2$, since $|\Root_m(f)| \ge 2$, Lemma \ref{lem:second_iterate_parity} ensures that $f^{(2)}$ is not of the form $X^sP(X)^2$ where $s\in\set{0,1}$. Consequently, by Lemma \ref{lem:Young}, there exists an integer $j \le 6$ such that the iterate $(f^{(j)},m)$ satisfies the LeVeque condition. Thus, we obtain the claimed composition $W_k = W_1 \circ W_2$, where $W_1 = f^{(j)}$ and $W_2$ is the remainder of the word.
	
\textbf{Case 2:} $W_k=f^{(i)}\circ g\circ W_2$ for some $1\le i\le 5$, $g\in \cF$, $g\ne f$, and $W_2\in \langle\mathcal{F}\rangle$. Then we set $W_1=f^{(i)}\circ g$.
	
	By our hypothesis, $|\Root_m(f)| \ge 2$. Applying Corollary \ref{cor:preservation_iteration} inductively over the iterations of $f$, we have $|\Root_m(f^{( j)})| \ge 2$ for all $j\ge 1$.
	
	Since by the hypothesis $\Root_m(f^{(i)}) \cap \mathcal{C}_g = \emptyset$, applying Lemma \ref{lem:suff_LeVeque} to the composition $W_1 = f^{(i)} \circ g$, we obtain 
	\begin{align*}
		|\Root_m(W_1)| \ge d_g \cdot |\Root_m(f^{(i)})| \ge  4. 
	\end{align*}
	Therefore $(W_1,m)$.

In both cases, we have factored $W_k = W_1 \circ W_2$, where the prefix $(W_1,m)$  satisfies the LeVeque condition and its length satisfies $\ell(W_1) \le 6$. 

Returning to our initial equation $W_k(\alpha) = b\beta^m$, we rewrite it as $W_1(x) = b\beta^m$, where $x = W_2(\alpha)$. Since $\alpha$ and the polynomials in $\mathcal{F}$ are defined over $\mathcal{O}_S$, the  element $x$ is  an $S$-integer.

Let $\mathcal{W}_{\le 6}^{\mathrm{LeV}}$ be the set of all compositions in $\langle \mathcal{F} \rangle$ of length at most $6$ that satisfy the LeVeque condition with respect to $m$. Since the family $\mathcal{F}$ is finite, the set $\mathcal{W}_{\le 6}^{\mathrm{LeV}}$ is also finite. By our construction in Case 1 and Case 2, the prefix $W_1$ is  an element of $\mathcal{W}_{\le 6}^{\mathrm{LeV}}$.

For each  $W \in \mathcal{W}_{\le 6}^{\mathrm{LeV}}$ of degree $d_W $, Lemma~\ref{lem:T_LeVeque} provides an  upper bound  for the height of any solution $x \in \mathcal{O}_S$ to the equation $W(x) = b\beta^m$ of the form
$$
\h(x)\le C_W\cdot N_S(b)^{e(m,r_W)},
$$
 where $e(m,r_W)$ and the constant $C_W$ are defined by~\eqref{eq:bound222} and~\eqref{eq:bound32}, and thus depend only on $\K$, $\cF$, $m$ and $S$.

Let $C=\max_{W \in \mathcal{W}_{\le 6}^{\mathrm{Lev}}} C_W$ and $e=\max_{W \in \mathcal{W}_{\le 6}^{\mathrm{Lev}}} e(m,r_W)$. Then, since 
 $W_1 \in \mathcal{W}_{\le 6}^{\mathrm{Lev}}$, we  obtain the upper bound
\begin{align}\label{eq:upper_bound_semi_explicit}
	\h(W_2(\alpha)) = \h(x) \le CN_S(b)^e.
\end{align}

We now establish a lower bound for $\h(W_2(\alpha))$. By Lemma \ref{lem:height_transformation}, there exists a uniform constant $L=L_{\mathcal{F}} > 0$ such that for any $f \in \mathcal{F}$ and any $z \in \ov\K$,
\begin{equation}
\label{eq:height f(z)}
	h(f(z)) - L \ge d_{\min} \cdot (\h(z) - L).
\end{equation}

By induction, for any  $W \in \langle \mathcal{F} \rangle$ of length $\ell(W)$ and any $z \in \overline{\mathbb{K}}$, we obtain
\begin{align}\label{eq:lower_bound_refined}
	\h(W(z)) - L \ge d_{\min}^{\ell(W)} \cdot (\h(z) - L).
\end{align}

We define set 
\begin{align*}
	S_L = \{ z \in \mathbb{K} : 
	\h(z) \le L+1 \}.
\end{align*}
By Lemma~\ref{lem:boundofset of boundheight} we have $|S_L| \le M$, where $M=\lfloor E_\K(L+1)\rfloor$ is defined by~\eqref{eq:C_b}.

Suppose that there exists a composition $F = g_{M} \circ \dots \circ g_1 \in \langle \mathcal{F} \rangle$ of exact length $\ell(F) = M$ such that $\h(F(\alpha)) \le L+1$. Let $(y_r)_{r=0}^{M}$ be the finite sequence of successive orbit evaluations defined by $y_0 = \alpha$ and $y_r = (g_r \circ \dots \circ g_1)(\alpha)$ for $1 \le r \le M$. 

We claim that $y_r \in S_L$ for all $r \in \{0, 1, \dots, M\}$. Indeed, if there were an index $r$ such that $\h(y_r) > L+1$, applying \eqref{eq:lower_bound_refined} we obtain
\begin{align*}
	\h(F(\alpha)) - L \ge d_{\min}^{M - r} \cdot (\h(y_r) - L) > d_{\min}^{M - r},
\end{align*}
which contradicts the initial assumption that $h(F(\alpha)) \le L+1$. Thus, we have
\begin{align*}
	y_r \in S_L, \quad  r \in \{0, 1, \dots, M\}.
\end{align*}

Since the sequence $(y_r)_{r=0}^{M}$ contains exactly $M + 1$ elements and $|S_L| \le M$,  there must exist two distinct indices $a, b \in \mathbb{Z}$ such that $0 \le a < b \le M$ and $y_a = y_b$, which contradicts the hypothesis that $\alpha$ is wandering.

Therefore, $\h(F(\alpha)) > L+1$ holds for all $F \in \langle \mathcal{F} \rangle$ of length $M$. Since $\mathcal{F}$ is finite, the set of all words of length $M$ is also finite.

Since we have assumed that the total composition length $k \ge 6 + M$ (recall that $M=E_\K(L+1)$), we can partition $W_2$ as $W_2 = W_t \circ F$, where $\ell(F) = M$ and $\ell(W_t) = \ell(W_2) - M \ge k - 6 - M$. Setting $z = F(\alpha)$ and applying the inductive inequality \eqref{eq:lower_bound_refined} to $W_t(z)$, we deduce
\begin{align}\label{eq:lower_bound_final}
	\h(W_2(\alpha)) 
	&\ge d_{\min}^{\ell(W_t)} \cdot \left( \h(F(\alpha)) - L \right) + L \nonumber \\
	&\ge d_{\min}^{k - 6 - M}.
\end{align}

Combining~\eqref{eq:lower_bound_final} with~\eqref{eq:upper_bound_semi_explicit}, we obtain the inequality
\begin{align*}
	  d_{\min}^{k - 6 - M} \le CN_S(b)^e.
\end{align*}
Since $C$ is positive and completely determined by $\K, \mathcal{F},  m$, and $S$, we conclude the proof.

\subsection{Proof of Corollary~\ref{cor:multiplicative_dependence}}
When $\gamma$ is a root of unity, let $W(\alpha)\in \mathcal{O}_\mathcal{F}(\alpha)\cap \mathcal{M}_\gamma$. Then $W(\alpha)$ is also a root of unity. Since $\K$ has finitely many roots of unity and $\alpha$ is a wandering point, the result is trivial. We now assume that $\gamma $ is not a root of unity. 

	Assume that the intersection is infinite. Since the family $\mathcal{F}$ is finite, an infinite intersection implies the existence of an infinite sequence of compositions $W_k \in \langle \mathcal{F} \rangle$ with  increasing lengths $k$ such that $W_k(\alpha) \in \mathcal{M}(\gamma)$.
	
	For each such $x=W_k(\alpha) \in \mathcal{M}(\gamma)$ for some $k\ge 1$,  there exist $a,b\in \Z$ not both zero such that $x^a=\gamma^b$. Since $\gamma$ is not a root of unity, we may assume that $a>0$, and extracting the $\gcd(a,b)$-th root of the multiplicative relation, we may assume that  $\gcd(a, b) = 1$ and $x^a = \zeta\gamma^b$ for a  root of unity $\zeta\in\K$.
	
 By Bézout's identity, there exist integers $u$ and $v$ such that $ua + vb = 1$. Defining $z = \gamma^u x^v\in \mathbb{K}^\times$, we have
 \begin{equation}
 \label{eq:gamma pow}
 \gamma =\gamma^{ua+vb}= \gamma^{ua} \zeta^{-v} x^{va}=\zeta^{-v} z^a,
 	\end{equation}
	and
 \begin{equation}
 \label{eq:x pow}
x =x^{ua+vb}= \zeta^u\gamma^{ub} x^{vb} =\zeta^u z^b.
	\end{equation}
	
	We now take the logarithmic Weil height of~\eqref{eq:gamma pow} to uniformly bound $a$. The relation $\gamma =\zeta^{-v} z^a$ implies $h(\gamma) = a \cdot h(z)$. Since $\gamma$ is not a root of unity, $z$ is not a root of unity either, and by Kronecker's theorem $h(z) > 0$. Using Northcott's theorem one can show there exists a positive absolute constant $c(\mathbb{K}) > 0$, depending only on $\mathbb{K}$, such that the height of any non-torsion element in $\mathbb{K}^\times$ is bounded below by $c(\mathbb{K})$ (an explicit lower bound is given by Dobrowolski~\cite{Do}). Thus, $h(z) \ge c(\mathbb{K})$, yielding $a \le \frac{h(\gamma)}{c(\mathbb{K})}.$
	Consequently, setting $A_0 = \max \{ 2, \lfloor h(\gamma)/c(\mathbb{K}) \rfloor \}$, we have $a \le A_0$.
	
	Let $D = \max_{f \in \mathcal{F}} \deg(f)$. We  fix an integer $m> \max(A_0, D)$ such that $\gcd(\prod_{1\le a\le A_0}a,m)=1$. 
	Again, by Bézout's identity, there exist integers $u', v'$ such that $u'a + v'm = 1$. Using~\eqref{eq:gamma pow}, this gives
	\begin{align*}
		z = z^{u'a + v'm} = (z^a)^{u'} (z^{v'})^m =\zeta^{vu'} \gamma^{u'} (z^{v'})^m,
	\end{align*}
and whence, from~\eqref{eq:x pow}, we obtain
	\begin{align*}
x = \zeta^u\left(\zeta^{vu'}  \gamma^{u'} (z^{v'})^m \right)^b =\zeta^{vu'b+u} \gamma^{u'b} (z^{v'b})^m. 
	\end{align*}
	
	Writing $u'b = mq + r$ for some integers $q, r$ with $0 \le r < m$, we have
	\begin{align*}
		 x =\zeta^{vu'b+u} \gamma^{mq+r} (z^{v'b})^m =\zeta^{vu'b+u} \gamma^r \left( \gamma^q z^{v'b} \right)^m.
	\end{align*}
	
	Let $\beta = \gamma^q z^{v'b}\in \mathbb{K}^\times$. Thus $W_k(\alpha) = C_r \beta^m$, where the constant $C_r =\zeta^{vu'b+u}\gamma^r$ belongs to a finite subset of $\mathbb{K}^\times$.
	
	Since $m > D \ge \deg(f)$ for all $f \in \mathcal{F}$, the multiplicity of any root of any polynomial $f \in \mathcal{F}$ is bounded by $D < m$. Thus, $m$ does not divide the multiplicity of any root of $f$. This implies that $\Root_m(f) = \cR(f)$ for all $f \in \mathcal{F}$. 
	
	By our hypothesis, $|\cR(f)| \ge 2$ for any $f\in \mathcal{F}$, which guarantees $|\Root_m(f)| \ge 2$. This also implies that \begin{align*}
		\Root_m(f^{(k)}) \cap \mathcal{C}_g\subseteq\mathcal{R}(f^{(k)}) \cap \mathcal{C}_g = \emptyset
	\end{align*}  for any $1\le k\le 5$.
	
We extend the set $S$ to a set of places $S'$ such that $\gamma\in\cO_{S'}$.	Therefore,  the family $\mathcal{F}$ and the chosen $m$ satisfy all hypotheses of Theorem \ref{thm:extend_for_semigroup_final}. Applying Theorem \ref{thm:extend_for_semigroup_final} to the equation $W_k(\alpha) = C_r \beta^m$, $C_r,\beta\in\cO_{S'}$, (and taking the maximum  bound over the finitely many choices of $C_r $) yields an upper bound on the word length
	\begin{align*}
		k \ll_{\K,\mathcal{F}, m, \alpha, \gamma} 1. 
	\end{align*}
	This contradicts our initial assumption of an infinite sequence of words with increasing lengths $k$. Thus, the intersection $\mathcal{O}_{\mathcal{F}}(\alpha) \cap \mathcal{M}(\gamma)$ must be finite.

\end{document}